\numberwithin{equation}{section}
\numberwithin{figure}{section}
\definecolor{mLightBrown}{HTML}{EB811B}
\theoremstyle{plain}
\newtheorem{theorem}{Theorem}[section]
\crefname{theorem}{Theorem}{Theorems}
\newtheorem{lemma}[theorem]{Lemma}
\crefname{lemma}{Lemma}{Lemmata}
\newtheorem{proposition}[theorem]{Proposition}
\crefname{proposition}{Proposition}{Propositions}
\newtheorem{corollary}[theorem]{Corollary}
\crefname{collary}{Corollary}{Corollaries}
\newtheorem{remark}{Remark}
\newcommand{\bitem}{\begin{itemize}}
\newcommand{\eitem}{\end{itemize}}
\newcommand{\mc}[1]{\mathcal{#1}}
\newcommand{\N}{\mathbb{N}}
\newcommand{\R}{\mathbb{R}}
\newcommand{\bpm}{\begin{pmatrix}}
\newcommand{\epm}{\end{pmatrix}}
\newcommand{\bsm}{\left(\begin{smallmatrix}}
\newcommand{\esm}{\end{smallmatrix}\right)}
\newcommand{\la}{\langle}
\newcommand{\ra}{\rangle}
\newcommand{\vphi}{\varphi}
\newcommand{\superPoi}{{_\text{\(^{\Poi}\)}}}
\newcommand{\superIP}{{_\text{\(^{\IP}\)}}}
\newcommand{\xk}{x^{(k)}}
\newcommand{\xkp}{x^{(k+1)}}
\newcommand{\xt}{x(\tau)}
\newcommand{\Rd}{\nabla_{\mc{M}}}
\newcommand{\eins}{\mathbb{1}}
\DeclareMathOperator{\Diag}{Diag}
\DeclareMathOperator{\dom}{dom}
\DeclareMathOperator{\intr}{int}
\DeclareMathOperator{\argmin}{arg min}
\DeclareMathOperator{\Hess}{Hess}
\DeclareMathOperator{\Exp}{Exp}
\DeclareMathOperator{\Poi}{Poi}
\DeclareMathOperator{\IP}{IP}
\DeclareMathOperator{\KL}{KL}
\title{
Information Geometry of Exponentiated Gradient: Convergence beyond L-Smoothness\textsuperscript{\ddag}}
\author{ \small{Yara Elshiaty\textsuperscript{*, \dag},  Ferdinand Vanmaele\textsuperscript{\dag}, \and Stefania Petra\textsuperscript{\dag}}}
\begin{document}

\begin{abstract}
We study the minimization of smooth, possibly nonconvex functions over the positive orthant, a key setting in Poisson inverse problems, using the exponentiated gradient (EG) method. Interpreting EG as Riemannian gradient descent (RGD) with the 
$e$-Exp map from information geometry as a retraction, we prove global convergence under weak assumptions -- without the need for $L$-smoothness -- and  finite termination of Riemannian Armijo line search. Numerical experiments, including an accelerated variant, highlight EG's practical advantages, such as faster convergence compared to RGD based on interior-point geometry.
\end{abstract}

\maketitle
\renewcommand{\thefootnote}{\fnsymbol{footnote}}
\footnotetext[3]{This preprint has not undergone peer review or any post-submission improvements or corrections. A Version of Record of this contribution will appear in the proceedings of the Scale Space and Variational Methods in Computer Vision (SSVM) 2025 conference, to be published in the Lecture Notes in Computer Science (LNCS) series by Springer. The final published version will be available online.
}
\footnotetext[1]{Institute for Mathematics, Heidelberg University \; (\url{elshiaty@math.uni-heidelberg.de})}
\footnotetext[2]{Institute for Mathematics \& Centre for Advanced Analytics and Predictive Sciences (CAAPS), University of Augsburg \; (\url{ferdinand-joseph.vanmaele@uni-a.de},\url{stefania.petra@uni-a.de})}

\markboth{\MakeUppercase{Yara Elshiaty, Ferdinand Vanmaele, and Stefania Petra}}{\MakeUppercase{Information Geometry of EG: Convergence beyond L-Smoothness}}

\section{Introduction} 
We consider the problem of minimizing a smooth potentially \emph{nonconvex} function on the positive orthant, common in Poisson inverse problems, nonnegative sparse coding, and tomographic reconstruction. Specifically, we aim to solve the optimization problem
\begin{equation*}
    f^{\min} := \min_{x \in \R^n_{+}} f(x)
\end{equation*}
where $f$ is smooth on $\intr \dom f=\R^n_{++}$, and $\R^n_{++}$ denotes the $n$-dimensional positive orthant, assuming $f^{\min} > - \infty$. 
Using information geometry, we explore the Riemannian structure of the parameter manifold of the Poisson distribution, which corresponds to the positive orthant. From this perspective, the \emph{exponentiated gradient} (EG) method can be interpreted as Riemannian gradient descent (RGD), where line search is performed along appropriately chosen geodesics.
The EG updates are given by
\begin{equation} \label{eq:EG} \tag{EG}
    x^{(k+1)} := x^{(k)}(\tau_k) := x^{(k)} \cdot \exp(- \tau_k \nabla f(x^{(k)})), \quad \forall k \in \N
\end{equation}
for an initial point $x^{(0)} \in \R^n_{++}$, whereas $\tau_k>0$ denotes the step size. While RGD methods ensure convergence to a local minimum with Riemannian Armijo line search if an accumulation point exists, this assumption is nontrivial. 

\vspace{2mm}
\noindent
\textbf{Related work.} 
The \eqref{eq:EG} method \cite{kivinen1997exponentiated} is a special case of \emph{mirror descent (MD)}
\begin{equation}\label{eq:BPGD}
    x^{(k+1)} = \argmin_{x \in \mc{M}} \tau_k \la \nabla f(x^{(k)}), x - x^{(k)} \ra + \KL(x, x^{(k)})
\end{equation}
with the Kullback-Leibler divergence $\KL(x,y) = \big \la x, \log \frac{x}{y} \big \ra - \la \eins, x - y \ra$ for $x \in \R^n_+, y \in \R^n_{++}$
and has been studied in various contexts, particularly for probability simplex constraints, where its iteration rule is computationally efficient, avoiding the need for projection.
Convergence guarantees typically require Lipschitz continuity of the objective function or gradient \cite{Auslender2006,Beck_mirror}  or smoothness with respect to the negative entropy \cite{Bauschke:2017aa}, with convergence achieved using either a fixed step size or Armijo line search.
However, these conditions are often violated in practical applications (see \cref{sec:experiments}). 
Notably, convergence guarantees under weak conditions have been shown for normalized EG with Euclidean Armijo line search on quantum density matrices \cite{Li_Cevher:2019}. 
Building on these results, we extend the analysis in \cite{Li_Cevher:2019} to the positive orthant using Riemannian Armijo line search within an information-geometric framework.
The relation of EG and RGD was studied in \cite{Raskutti2015,Raus2024}.

\textbf{Contribution.}
We establish the global convergence of EG under weak conditions, providing a convergence guarantee for EG as a RGD method with \emph{Riemannian Armijo line search}. This result relies solely on the smoothness of the cost function and the finite termination of the line search, which leverages the self-concordant-like properties of the Kullback-Leibler divergence. Notably, this condition is substantially weaker than the relative $L$-smoothness assumption \cite{Bauschke:2017aa} commonly required for MD convergence.

\textbf{Organization.}
We introduce the Poisson geometry on the positive orthant, interpreting EG updates as RGD with the $e$-Exp map as a retraction in \cref{sec:Geometry-EG}. 
\cref{sec:EG-R-Armijo} proves finite termination of Riemannian Armijo line search under weak assumptions (\cref{thm:RArmijo-terminates}) and examines acceleration via geometric conjugate gradient directions. \cref{sec:experiments} showcases EG's practical advantages in Poisson inverse problems, including faster convergence over interior-point RGD.

\textbf{Basic Definitions and Notation.}
Let $\la \cdot, \cdot \ra$ denote the Euclidean inner product on $\R^n$. For a sufficiently smooth function $f$, we denote its gradient by $\nabla f$ and its Hessian by $\nabla^2 f$. We write $xy = (x_1y_1, \dots,x_n y_n)^T$ for the Hadamard product of two vectors $x, y \in \R^n$ and componentwise division by $\frac{x}{y}$ if $y \neq 0$. Like-wise we define the functions $\exp(x)$, $\log(x)$ to a vector $x$ elementwise.
For a smooth manifold $\mc{M}$, the \emph{tangent space} at $x \in \mc{M}$ is denoted by $T_x\mc{M}$. 
On a Riemannian manifold $(\mc{M}, g)$, the \emph{Riemannian metric tensor} $g$ defines an inner product $\la u, v \ra_{g(x)} := \la u, g(x) v \ra$ for $u, v \in T_x\mc{M}$. When the context is clear, we simply write $\la u, v \ra_x$.
Given an \emph{affine connection} $\mc{D}$, the \emph{exponential map} $\Exp$ is defined by $\Exp_x(v) = \gamma_{x, v}(1)$, where $\gamma_{x, v}(\tau)$ is the $\mc{D}$-geodesic through $x = \gamma_{x, v}(0)$ with initial velocity \(\dot{\gamma}_{x, v}(0) = v\). We call the Levi-Civita connection the $g$-connection for short. For further details, see \cite{Amari:2000,Jost2005}.
\section{Geometry of EG on the Positive Orthant}\label{sec:Geometry-EG}

\textbf{The Poisson Geometry.}
For a discrete random vector $z \in \N^n$, we define the vector of density functions for the Poisson distribution as
\begin{equation*}
    \tilde p_i(z; x) := \frac{x_i^{z_i} \exp(-x_i)}{z_i!}, \qquad x \in \R^n_{++}.
\end{equation*}
This can be rewritten as an exponential family with the \emph{e-parameters} $x_\ast$
\begin{equation*}\label{eq:Poisson-e-distribution}
    p_i(z; x_\ast) = \frac{1}{z_i!} \exp(z_ix_{\ast,i} - \psi^*_i(x_{\ast})),
\end{equation*}
where $\psi^\ast(x_\ast) = \la \eins, e^{x_\ast} \ra$ is the log-partition function. By the classical Legendre transform $\nabla \psi^\ast = (\nabla \psi)^{-1}$, we define the dual parameters as
\begin{equation}\label{eq:def-x-xast}
    x_{\ast} := \nabla \psi(x) = \log x,\qquad
    x = \nabla \psi^{\ast}(x_{\ast}) = e^{x_{\ast}},\quad \text{with} \; \;
    \intr \dom \psi = \R^n_{++},
\end{equation}
where the convex conjugate function of $\psi^\ast$ is given by the negative entropy:
\begin{equation} \label{eq:def-negative-entropy}
    \psi(x) := \sum_i \psi_i(x) := \la x \log x \ra - \la 1, x \ra, \qquad x \in \R^{n}_{++}.
\end{equation}
We define a Riemannian structure $(\mc{M}, g)$ on $\mc{M} = \intr \dom \psi = \R^n_{++}$ as the $n$-product manifold equipped with the Fisher-Rao metric
\begin{equation} \label{eq:Poisson-geometry}
    g(x) = \nabla^2 \psi = \Diag\Big( \frac{1}{x}\Big), \qquad g_{ij}(x) = \frac{\partial^2}{\partial x_i \partial x_j} D_{\psi}(x,y)\big|_{y = x},
\end{equation}
where $D_\phi(x,y) = \phi(x) - \phi(y) - \la \nabla \phi(y), x - y \ra$ for $x \in \dom \phi, y \in \intr \dom \phi$ denotes the \emph{Bregman divergence} induced by a convex function $\phi$. For the Poisson distribution, the Bregman divergence induced by $\psi$ corresponds to the KL divergence.
We naturally identify $T_x\mc{M} \simeq \R^n$ for all $x \in \mc{M}$.
This defines a dually flat Riemannian structure $(\mc{M}, g, \mc{D}^m, \mc{D}^e)$ with the \emph{m-connection} $\mc{D}^m$ and \emph{e-connection} $\mc{D}^e$ induced as the primal and dual connections, respectively, cf. \cite{Amari:2000}. Henceforth, we will be working with $\mc{D}^e$. Its Christoffel symbols of first kind are given by
\begin{equation} \label{eq:e-Poi-Christoffel-symbols}
    \Gamma_{ij,k} = \partial_i \partial_j \partial_k \psi(x) = \begin{cases}
        - \frac{1}{x_i^2},& \text{if } i=j=k \\ 0,& \text{otherwise},
    \end{cases}
\end{equation}
with the second kind yielding $\Gamma_{kk}^k = g^{kk}(x) \Gamma_{kk,k} = - \frac{1}{x_k}$ and zero otherwise. This enables us to solve the decoupled ODEs defining the e-geodesics $\gamma^e_{x,v}(\tau) = (\gamma_1(\tau), \dots ,\gamma_n(\tau))$, \cite[Def.~1.4.2]{Jost2005}
\begin{equation*} 
    \frac{d^2\gamma_k}{d\tau^2}-\frac{1}{\gamma_k} \Big(\frac{d\gamma_k}{dt}\Big)^2 = 0, \quad \forall k \in [n]
\end{equation*}
in closed form. As a result, the geodesic curve emanating from $\gamma^e_{x,v}(0)=x \in \mc{M}$ in direction $\dot\gamma^e_{x,v}(0)=v$ reads
\begin{equation*} 
\gamma^e_{x,v}(\tau) = x \exp\Big(\tau \frac{v}{x}\Big).
\end{equation*}
Thus, the $e$-exponential map $\Exp^e_x: v \mapsto \gamma^e_{x,v}(1) = x \exp(\frac{u}{x})$ is complete. 

\vspace{2mm}
\noindent
\textbf{EG as Riemannian gradient descent.}
Let $f: \mc{M} \to \R$ be a smooth function.
By definition \cite[Eq.~(3.31)]{Absil:2008aa}, the \emph{Riemannian gradient} $\nabla_{\mc{M}} f(x)$ of the objective function $f$ is the unique tangent vector satisfying
\begin{equation*} 
\la \nabla_{\mc{M}} f(x), v \ra_{g(x)} = \la \nabla f(x), v \ra,\qquad
\forall v \in T_{x}\mc{M}.
\end{equation*}
A point $x \in \mc{M}$ is a \emph{critical point} of $f$ if and only if $\nabla_{\mc{M}_g} f(x) = 0.$
Consequently, in the context of the Poisson geometry, the Riemannian gradient is given by
\begin{equation*}
    \Rd f(x) \overset{\eqref{eq:Poisson-geometry}}{=} \Diag(x) \nabla f(x)
\end{equation*}
and $x \in \mc{M}$ is a critical point of a smooth function $f$ if and only if $\nabla f(x) = 0$, since $x \neq 0$.

The RGD method for a given exponential map $\Exp$ defines the iterative update rule
\begin{equation} \label{eq:RGD}
    \tag{RGD}
    \xkp = \Exp_{\xk}\Big(-\tau_k \nabla f_{\mc{M}}(\xk)\Big)
\end{equation}
with step size $\tau_k > 0$.
The definition of the Riemannian gradient on the Poisson manifold leads to the following representation of the EG iteration.
\begin{proposition}[EG as RGD {\cite[Proposition 4.1]{Raus2024}}]
    Let $\mc{M} = \R^n_{++}$ be endowed with the Poisson Fisher-Rao metric \eqref{eq:Poisson-geometry}. Then, the \eqref{eq:EG} iteration is equivalent to
    \begin{equation*}
        x^{(k+1)} = \Exp_{x^{(k)}}^{e}\big(-\tau_k \nabla_{\mc{M}_{\superPoi}}f(x^{(k)})\big).
    \end{equation*} 
\end{proposition}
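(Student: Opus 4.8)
The plan is to prove the identity by direct substitution of two explicit formulas already established in this section: the closed form of the $e$-exponential map, $\Exp^e_x(v) = x\exp(v/x)$, and the expression for the Poisson Riemannian gradient, $\Rd f(x) = \Diag(x)\nabla f(x)$. The single algebraic fact that makes everything collapse is that dividing $\Diag(x)\nabla f(x)$ componentwise by $x$ returns $\nabla f(x)$, so the factor $x$ inside the exponential cancels against the inverse metric.

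First I would record the Riemannian gradient formula: from \eqref{eq:Poisson-geometry} we have $g(x) = \Diag(1/x)$, hence $g(x)^{-1} = \Diag(x)$, and inserting this into the defining relation $\la \Rd f(x), v\ra_{g(x)} = \la \nabla f(x), v\ra$, valid for all $v \in T_x\mc{M} \simeq \R^n$, yields $\Rd f(x) = g(x)^{-1}\nabla f(x) = \Diag(x)\nabla f(x)$ — exactly as stated above. Second, I would recall that the geodesic $\gamma^e_{x,v}(\tau) = x\exp(\tau v/x)$ solves the decoupled $e$-geodesic ODEs associated with the Christoffel symbols \eqref{eq:e-Poi-Christoffel-symbols}, so that evaluating at $\tau = 1$ gives $\Exp^e_x(v) = x\exp(v/x)$; this is the only step involving an actual computation, and it has already been carried out in the preceding paragraphs.

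Third, setting $v = -\tau_k \Rd f(x^{(k)}) = -\tau_k \Diag(x^{(k)})\nabla f(x^{(k)})$ and substituting into the exponential map gives
\begin{equation*}
\Exp^e_{x^{(k)}}\big(-\tau_k \Rd f(x^{(k)})\big)
= x^{(k)}\exp\!\Big(\tfrac{1}{x^{(k)}}\big(-\tau_k \Diag(x^{(k)})\nabla f(x^{(k)})\big)\Big)
= x^{(k)}\exp\big(-\tau_k \nabla f(x^{(k)})\big),
\end{equation*}
which is precisely the \eqref{eq:EG} update, completing the proof. I do not expect any genuine obstacle here: the heavy lifting (solving the geodesic equation in closed form and identifying the metric with $\Diag(1/x)$) is already done, so this proposition reduces to a one-line cancellation. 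The only points that deserve a word of care are bookkeeping ones: that all iterates stay in $\R^n_{++}$, so componentwise division by $x^{(k)}$ is well defined (immediate by induction, since $\exp(\cdot)$ is strictly positive), and that the identification $T_x\mc{M} \simeq \R^n$ is used consistently, so that $\nabla f(x)$ and $\Rd f(x)$ are compared in the same coordinates.
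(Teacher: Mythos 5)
Your proof is correct and is exactly the intended argument: the paper does not reprove this proposition (it cites it from Raus2024), but the two ingredients you use --- $\Rd f(x)=\Diag(x)\nabla f(x)$ and $\Exp^e_x(v)=x\exp(v/x)$ --- are precisely what the preceding paragraphs establish, and the result then follows by the one-line cancellation you give. Your bookkeeping remarks (positivity of the iterates, the identification $T_x\mc{M}\simeq\R^n$) are appropriate and consistent with the paper's conventions.
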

We refer to the Riemannian gradient descent step using the $e$-exponential map and the Poisson Fisher-Rao metric as Poisson $e$-RGD, and use it interchangeably with EG. 
\begin{remark}
    Note that the Poisson geodesic equations are also realized by other geometries on the positive orthant. In this remark, we briefly explore a notable case. The \emph{interior-point geometry} defined by the Riemannian metric
    \begin{equation*} 
        g_\superIP(x) = \Diag\Big( \frac{1}{x^2}\Big),
    \end{equation*}
    can be constructed as the Hessian of the barrier function $- \log x$ (cf. \cite{NesterovT02}). This geometry corresponds to the one generated by the Fisher-Rao metric tensor of the exponential distribution. Since $g_\superIP$ is diagonal, the Christoffel symbols for the $g$-connection are given by
    \begin{equation*} 
        \Gamma_{kk}^k(x) = \frac{1}{2} g_\superIP^{kk} \partial_k {(g_\superIP)}_{kk} = - \frac{1}{x_k}.
    \end{equation*}
    They coincide with the Christoffel symbols for the $e$-connection with respect to the Poisson metric, cf. \eqref{eq:e-Poi-Christoffel-symbols}, meaning that for all points $x \in \mc{M}$ and tangents $v \in T_x\mc{M}$
    \begin{equation*} 
         \gamma_{x,v}^{g_\superIP}(\tau) = \gamma_{x, v}^{e_\superPoi}(\tau) =x \exp\Big(\tau\frac{v}{x}\Big),
    \end{equation*}
    where we added superscripts to distinguish the two geometries. 
    However, the EG update cannot be reformulated as the $g$-RGD on the Riemannian manifold $(\mc{M}, g_\superIP)$ since ${\Rd}_{\superIP}f(x) = x^2 \nabla f(x)$. The RGD iterates of the $(\mc{M}, g_\superIP)$ with the $g$-Exp map are then given by
    \begin{equation} \label{eq:IP-g-RGD}
        x^{(k+1)} = x^{(k)} \exp(- \tau_k x \nabla f(x)),
    \end{equation}
    which we denote by IP$g$-RGD when necessary, see \cref{sec:experiments}.
\end{remark}

\section{EG with Riemannian Armijo line search}\label{sec:EG-R-Armijo}
Given a point $x \in \mc{M}$ and constants $\bar \tau > 0, \beta, \sigma \in (0,1)$ the Armijo line search outputs $\tau_k = \beta^m \bar \tau$ where $m$ is the least nonnegative integer that satisfies
\begin{equation} \label{eq:R-Armijo}
    \tag{R-Armijo}
    f\Big(\Exp_x\big(-\beta^m \bar \tau \nabla_{\mc
    M} f(x)\big)\Big) 
    \leq f(x) - \sigma \beta^m \bar \tau \| \nabla_{\mc{M}}f(x) \|_x^2.
\end{equation}
\cite[Proposition 4.7]{Boumal2023} shows that every accumulation point of the Armijo backtracking process is a critical point of 
$f$. However, without additional smoothness assumptions, such as Lipschitz continuity of $f$ or its gradient, finite termination of backtracking — and thus the existence of accumulation points — cannot be guaranteed \cite[Lemma 4.12]{Boumal2023}.

We establish the termination of EG, framed as Poisson e-RGD with Armijo-based backtracking. Recall, that $\xt = \Exp_x^{e_\superPoi}(- \tau \Rd f(x))$.

\begin{theorem}[Termination of \eqref{eq:R-Armijo}]\label{thm:RArmijo-terminates}
    Let $f: \mc{M} \to \R$ be a smooth function. Then there exists some $\tau_x > 0$ such that
    \begin{equation*}
        f(\xt) \leq f(x) - \sigma \tau \| \Rd f(x)\|^2_x \quad \forall \tau \in [0, \tau_x].
    \end{equation*}
\end{theorem}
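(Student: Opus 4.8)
The plan is to Taylor-expand $f$ along the $e$-geodesic $\tau \mapsto x(\tau) = x\exp\big(-\tau\,\nabla f(x)\big)$ (using $\Rd f(x) = \Diag(x)\nabla f(x)$, so that the componentwise exponent is indeed $-\tau\nabla f(x)$) and show that the first-order term reproduces exactly $-\tau\|\Rd f(x)\|_x^2$, while the remainder is $o(\tau)$ uniformly on a neighborhood of $\tau = 0$. Concretely, define $\vphi(\tau) := f(x(\tau))$, a smooth scalar function on a neighborhood of $0$ in $\R$ since $f$ is smooth on $\mc{M}$ and $\tau \mapsto x(\tau)$ is smooth with $x(\tau) \in \mc{M}$ for all $\tau$ (completeness of $\Exp^e$). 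Then $\vphi'(0) = \la \nabla f(x), \dot x(0)\ra$, and since $\dot x(0) = -x\nabla f(x) = -\Rd f(x)$, the chain rule gives $\vphi'(0) = -\la \nabla f(x), \Rd f(x)\ra = -\la \nabla f(x), \Diag(x)\nabla f(x)\ra = -\|\Rd f(x)\|_x^2$ by the defining property of the Riemannian gradient (or directly: $\la \nabla f(x), \Diag(x)\nabla f(x)\ra = \la \Diag(x)\nabla f(x), \Diag(1/x)\,\Diag(x)\nabla f(x)\ra = \|\Rd f(x)\|_x^2$).

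Next I would handle the two trivial-or-not cases. If $\Rd f(x) = 0$, i.e.\ $\nabla f(x) = 0$, then $x(\tau) \equiv x$, the inequality reads $f(x) \le f(x)$, and any $\tau_x > 0$ works. Otherwise $\|\Rd f(x)\|_x^2 > 0$. By Taylor's theorem with remainder, for $\tau$ in a closed interval $[0,\bar\tau_0]$ on which $x(\cdot)$ stays in $\mc{M}$, we have $\vphi(\tau) = \vphi(0) + \tau\vphi'(0) + R(\tau)$ with $R(\tau)/\tau \to 0$ as $\tau \to 0^+$ (since $\vphi \in C^2$, $|R(\tau)| \le \tfrac12 \tau^2 \sup_{[0,\bar\tau_0]} |\vphi''|$). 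Hence
\begin{equation*}
    f(x(\tau)) - f(x) = -\tau\|\Rd f(x)\|_x^2 + R(\tau) \le -\sigma\tau\|\Rd f(x)\|_x^2
\end{equation*}
precisely when $R(\tau) \le (1-\sigma)\tau\|\Rd f(x)\|_x^2$, i.e.\ when $R(\tau)/\tau \le (1-\sigma)\|\Rd f(x)\|_x^2$. Since $\sigma \in (0,1)$ the right side is a fixed positive number, and $R(\tau)/\tau \to 0$, so there is $\tau_x \in (0,\bar\tau_0]$ for which this holds on all of $[0,\tau_x]$ (e.g.\ take $\tau_x = \min\{\bar\tau_0,\ 2(1-\sigma)\|\Rd f(x)\|_x^2 / \sup_{[0,\bar\tau_0]}|\vphi''|\}$ when $\vphi''$ is not identically zero, and $\tau_x = \bar\tau_0$ otherwise). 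Finite termination of \eqref{eq:R-Armijo} then follows since $\beta^m\bar\tau \to 0$, so some $\beta^m\bar\tau \in [0,\tau_x]$.

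The only genuine point requiring care — the ``main obstacle'' — is justifying that $\vphi$ is $C^2$ (or at least twice differentiable with locally bounded second derivative) on a neighborhood of $\tau=0$: this needs that the geodesic $x(\tau) = x\exp(-\tau\nabla f(x))$ remains in the open orthant $\R^n_{++}$, which is immediate because the exponential is strictly positive, together with smoothness of $f$ on $\intr\dom f = \R^n_{++}$. I would also remark that this argument is genuinely local (it produces a $\tau_x$ depending on $x$), which is exactly what is needed for finite termination of backtracking at each iterate but not for a uniform step size — consistent with the paper's emphasis that no $L$-smoothness is assumed. Note that nothing here uses convexity or self-concordance; those enter later when one wants the produced $\tau_x$ bounded away from $0$ along the whole sequence.
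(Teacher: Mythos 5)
Your argument is correct for the theorem as stated, but it is a genuinely different --- and much more elementary --- route than the paper's. You exploit that the $e$-geodesic $\tau\mapsto x(\tau)=x\exp(-\tau\nabla f(x))$ stays in a compact subset of $\R^n_{++}$ for $\tau$ in a bounded interval, so $\vphi(\tau)=f(x(\tau))$ is $C^2$ there with bounded second derivative; combined with $\vphi'(0)=-\|\Rd f(x)\|_x^2$ and $\sigma<1$, the Lagrange remainder is absorbed and the pointwise claim follows (with the critical case $\Rd f(x)=0$ handled trivially). This is the classical termination argument for Armijo backtracking along any smooth retraction, and it does establish the literal statement ``there exists $\tau_x>0$ depending on $x$.'' The paper instead follows \cite{Li_Cevher:2019}: it identifies $\|\Rd f(x)\|_x^2$ with $h''(0)$ for the self-concordant-like function $h(\tau)=\la\eins,\xt\ra$ (\cref{lm:Kl-via-SCL}), uses the mirror-descent inequality \eqref{eq:LemmaB.1} to lower-bound $-\la\nabla f(x),\xt-x\ra$ by $\KL(\xt,x)/\tau$, and then compares both sides of the Armijo condition through $\sqrt{\KL(\xt,x)}$ via the KL scaling property (\cref{lm:Cor3.1}) and the Pinsker-type bound (\cref{cor:1norm-Kl-Upper-bound}). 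What that heavier machinery buys is a description of the admissible $\tau_x$ in terms of information-geometric quantities --- the modulus of continuity of $\nabla f$, $\mu=\|\nabla f(x)\|_\infty$, and $\KL(x(\bar\tau),x)$ --- rather than in terms of Hessian data $\sup|\vphi''|$ along the curve; this is the kind of control one would want when trying to keep step sizes bounded away from zero along the whole iterate sequence, precisely the step your closing remark defers to the global analysis. What your route buys is brevity, reliance on nothing beyond smoothness and the retraction property $\dot x(0)=-\Rd f(x)$, and the absence of side conditions: the paper's proof divides by $\eta=\min_i|\partial_i f(x)|$ and thus implicitly assumes every partial derivative of $f$ at $x$ is nonzero, a degeneracy your argument avoids entirely. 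For the existence claim actually asserted by the theorem, both proofs suffice.
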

The following convergence statement is a direct consequence of the termination of the backtracking process and \cite[Proposition 4.7]{Boumal2023}.
\begin{corollary}[Convergence of EG with Riemannian Armijo line search] \label{cor:convergence-loc-min}
The sequence $f(x^{(k)})$ defined by \eqref{eq:RGD} and step size \eqref{eq:R-Armijo} monotonically converges to a local minimizer of $f$. If $f$ is convex, the sequence converges to a global minimizer.
\end{corollary}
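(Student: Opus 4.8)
The plan is to deduce the corollary from three facts: finite termination of the line search (\cref{thm:RArmijo-terminates}), the generic analysis of Riemannian gradient descent with Armijo backtracking in \cite[Proposition~4.7]{Boumal2023}, and the standing hypothesis $f^{\min}>-\infty$.

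First I would verify that \eqref{eq:RGD} with the step rule \eqref{eq:R-Armijo} produces a well-defined sequence. Fix $k$ and write $x=x^{(k)}$. If $\nabla_{\mc{M}}f(x)=0$ then $x(\tau)=x\exp(-\tau\nabla f(x))=x$, the \eqref{eq:R-Armijo} test holds for $m=0$, and the sequence is stationary at a critical point. Otherwise \cref{thm:RArmijo-terminates} gives a $\tau_x>0$ with $f(x(\tau))\le f(x)-\sigma\tau\|\nabla_{\mc{M}}f(x)\|_x^2$ for all $\tau\in[0,\tau_x]$, so every $m$ with $\beta^m\bar\tau\le\tau_x$ satisfies \eqref{eq:R-Armijo}; hence the least such nonnegative integer $m_k$ exists and $\tau_k=\beta^{m_k}\bar\tau\in(0,\bar\tau]$. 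Since $x(\tau)\in\R^n_{++}=\mc{M}$ for every $\tau$, the iterate $x^{(k+1)}=x^{(k)}(\tau_k)$ stays on the manifold, so the recursion never fails.

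Next I would read off monotone convergence of the values: the accepted step satisfies, by \eqref{eq:R-Armijo},
\[
f(x^{(k+1)})\le f(x^{(k)})-\sigma\tau_k\|\nabla_{\mc{M}}f(x^{(k)})\|_{x^{(k)}}^2\le f(x^{(k)}),
\]
so $(f(x^{(k)}))_{k\in\N}$ is nonincreasing, and being bounded below by $f^{\min}>-\infty$ it converges to some $f^{\infty}\ge f^{\min}$; summing also gives $\sum_{k}\tau_k\|\nabla_{\mc{M}}f(x^{(k)})\|_{x^{(k)}}^2<\infty$. Because the line search terminates at every iteration, \cite[Proposition~4.7]{Boumal2023} applies and shows that every accumulation point $x^{\ast}$ of $(x^{(k)})$ is a critical point of $f$, i.e.\ $\nabla f(x^{\ast})=0$; continuity of $f$ along the converging subsequence then forces $f(x^{\ast})=f^{\infty}$. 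Since $\sigma\in(0,1)$ makes the scheme strictly decreasing away from critical points, such a limit point is a local minimizer, which is the first claim; if in addition $f$ is convex on $\mc{M}$, the condition $\nabla f(x^{\ast})=0$ already makes $x^{\ast}$ a global minimizer, whence $f^{\infty}=f(x^{\ast})=f^{\min}$.

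The genuinely nontrivial input is entirely upstream, in \cref{thm:RArmijo-terminates}: obtaining sufficient decrease for all small step sizes from mere smoothness of $f$ — with the self-concordant-like control of the KL divergence (equivalently, the third derivatives \eqref{eq:e-Poi-Christoffel-symbols} of the negative entropy) playing the role usually reserved for $L$-smoothness — is where the work is, while the above is bookkeeping. The one delicate point internal to the corollary is the passage from ``$x^{\ast}$ critical'' to ``$x^{\ast}$ local minimizer'': to make it unconditional one should either assume coercivity or compactness of a sublevel set of $f$ (so that an accumulation point is guaranteed to exist) or, as is standard in the Riemannian line-search literature, read the nonconvex claim as a statement about the critical character of limit points, the clean unconditional statement being the convex case $f^{\infty}=f^{\min}$.
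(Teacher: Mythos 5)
Your proof follows exactly the paper's route: the paper's entire argument for this corollary is the one-line observation that it is ``a direct consequence of the termination of the backtracking process and \cite[Proposition 4.7]{Boumal2023}'', and your expansion of the bookkeeping (well-definedness of the iteration via \cref{thm:RArmijo-terminates}, monotone decrease from the Armijo inequality, boundedness below by $f^{\min}$, criticality of accumulation points) is precisely the intended content. Your closing caveat --- that passing from ``accumulation points are critical'' to ``the sequence converges to a local minimizer'' needs either a guarantee that accumulation points exist (coercivity or a compact sublevel set) or a weaker reading of the nonconvex claim --- is a fair observation about the corollary as stated rather than a gap in your argument, and the paper itself does not address it.
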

We follow the proof of \cite[Proposition 3.2]{Li_Cevher:2019}, which establishes a similar result for the normalized exponentiated gradient on the set of quantum density matrices. The proof relies crucially on the properties of $h$ which characterizes the Kullback-Leibler divergence $\KL = D_\psi$, for $\psi$ given by \eqref{eq:def-negative-entropy}, in the following Lemma.
\begin{lemma} \label{lm:Kl-via-SCL}
    For any $x \in \mc{M}$ and $\tau \geq 0$ the following holds
    \begin{equation*}
        \KL(\xt, x) = D_h(0, \tau)
    \end{equation*}
    with $h(\tau) := \la \eins, \xt \ra$.
\end{lemma}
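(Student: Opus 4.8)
The plan is to compute both sides explicitly and check they agree. First I would unwind the definitions. Since $\xt = \Exp_x^{e_\superPoi}(-\tau\Rd f(x)) = x\exp(-\tau\,\Diag(x)\nabla f(x)/x) = x\exp(-\tau\nabla f(x))$, which is exactly the \eqref{eq:EG} update, I have an explicit formula for $\xt$ as a function of $\tau$, componentwise $x_i e^{-\tau g_i}$ with $g = \nabla f(x)$. Then $h(\tau) = \la\eins,\xt\ra = \sum_i x_i e^{-\tau g_i}$, a smooth scalar function of $\tau$, and I would record $h(0) = \la\eins,x\ra$, $h'(\tau) = -\sum_i x_i g_i e^{-\tau g_i} = -\la g,\xt\ra$, in particular $h'(0) = -\la g,x\ra$.

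Next I would write out $D_h(0,\tau)$ using the definition of the Bregman divergence given in \eqref{eq:Poisson-geometry}, namely $D_h(a,b) = h(a) - h(b) - h'(b)(a-b)$; with $a = 0$, $b = \tau$ this is $h(0) - h(\tau) + \tau h'(\tau) = \la\eins,x\ra - \la\eins,\xt\ra - \tau\la g,\xt\ra$. Separately I would expand the left-hand side $\KL(\xt,x)$ using the formula $\KL(u,v) = \la u,\log\frac{u}{v}\ra - \la\eins,u-v\ra$ stated in the introduction. Here $\log\frac{\xt}{x} = \log(\exp(-\tau g)) = -\tau g$ componentwise, so $\la\xt,\log\frac{\xt}{x}\ra = -\tau\la g,\xt\ra$, and $\la\eins,\xt - x\ra = \la\eins,\xt\ra - \la\eins,x\ra$. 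Hence $\KL(\xt,x) = -\tau\la g,\xt\ra - \la\eins,\xt\ra + \la\eins,x\ra$, which is term-for-term equal to the expression for $D_h(0,\tau)$ obtained above. This closes the proof.

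There is no serious obstacle here; the statement is essentially a bookkeeping identity, and the only thing to be careful about is the orientation of the Bregman divergence (which argument is the "base point") and the sign convention in the $\KL$ formula, since the paper defines $\KL$ with the linear correction term $-\la\eins,x-y\ra$. I would double-check that the $h$ defined on $[0,\infty)$ is indeed convex so that $D_h$ is a bona fide Bregman divergence — $h''(\tau) = \sum_i x_i g_i^2 e^{-\tau g_i} \ge 0$ since $x \in \R^n_{++}$ — though strictly speaking convexity is not needed for the algebraic identity itself, only for $D_h \ge 0$, which will matter later when this Lemma is used to control the Armijo condition via self-concordant-like estimates on $h$.
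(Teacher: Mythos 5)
Your proof is correct, and every step of the computation checks out: with $g=\nabla f(x)$ one indeed has $D_h(0,\tau)=\la\eins,x\ra-\la\eins,\xt\ra+\tau h'(\tau)$ and $\KL(\xt,x)=-\tau\la g,\xt\ra-\la\eins,\xt\ra+\la\eins,x\ra$, which agree because $h'(\tau)=-\la g,\xt\ra$. However, your route is genuinely different from the paper's. The paper does not expand anything componentwise; it invokes the Legendre duality of Bregman divergences, $D_\psi(u,v)=D_{\psi^\ast}(v_\ast,u_\ast)$ with $u_\ast=\log u$, together with the observation that $h(\tau)=\psi^\ast(\log\xt)$ is the restriction of the log-partition function $\psi^\ast$ to the \emph{affine} line $\tau\mapsto\log x-\tau\nabla f(x)$ in the dual ($e$-) coordinates, so that $D_{\psi^\ast}(\log x,\log\xt)=D_h(0,\tau)$. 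Your direct calculation is more elementary and self-contained (and has the side benefit of exhibiting $h'$ explicitly, which the paper needs anyway for the self-concordance estimates), whereas the paper's argument is shorter, explains \emph{why} the identity holds (the EG update is a straight line in $e$-coordinates, and Bregman divergences restrict to Bregman divergences along affine lines), and would generalize verbatim to any exponential family, not just the Poisson one. Your closing sanity check that $h''\ge 0$ is correct and harmless, though, as you note, not needed for the identity itself.
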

\begin{proof}
    This follows by the identity $h(\tau) = \psi^\ast(\xt_\ast) = \psi^\ast(\log \xt)$ as defined in \eqref{eq:def-x-xast} and the duality
    \begin{equation*}
        \KL(\xt, x) = D_\psi(\xt, x) = D_{\psi^\ast}( \log x, \log \xt) = D_h(0, \tau). 
    \end{equation*}
\end{proof}
 We call a three times continuously differentiable function $d: \R \to \R$ $\mu$-\emph{self-concordant like} if and only if it is convex and satisfies $|d^{\prime \prime \prime}(\tau)| \leq \mu d^{\prime \prime}(\tau)$ for all $\tau$. Once can easily see, that $h$ is convex and especially $\mu$-self-concordant like with $\mu = \|\nabla f\|_\infty$ which is bounded by the continuity of $\nabla f$. The derivatives up to third order then state
\begin{align*}
    h^\prime(\tau) &= \la - \nabla f(x), x(\tau) \ra, \\
    h^{\prime \prime}(\tau) &= \la \big( \nabla f(x)\big)^2, x(\tau) \ra, \\
    h^{\prime \prime \prime}(\tau) &= -\la \big(\nabla \big(f(x)\big)^3, x(\tau) \ra,
\end{align*}
whereas we use the identity $\dot x(\tau) = - \nabla f(x)\xt.$
The representation of the Poisson geometry-defining $\KL$-divergence by a self-concordant-like function is the crucial step for the proof of \cref{thm:RArmijo-terminates}. It enables the following result which we will make use of later.
\begin{lemma}[KL scaling property]\label{lm:Cor3.1}
    Let $x \in \mc{M}$ and $\bar \tau > 0$. Suppose $\mu > 0$. Then it holds that
    \begin{equation*}
        \kappa \KL(x(\bar \tau), x) \leq \frac{\KL(\xt, x)}{\tau^2}, \quad \forall \tau \in (0, \bar \tau],
    \end{equation*}
    where $\kappa := \frac{\mu^2}{2} \big(\exp(\mu \bar \tau) (\mu \bar \tau - 1)+1\big)^{-1}$.
\end{lemma}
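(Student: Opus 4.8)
The plan is to collapse the whole statement into a one-dimensional estimate for the scalar function $h(\tau)=\la\eins,x(\tau)\ra$ of \cref{lm:Kl-via-SCL} and then to use nothing about $f$ beyond the one-sided self-concordant-like bound $h'''\le\mu h''$. Combining \cref{lm:Kl-via-SCL} with the integral form of the Bregman divergence gives
\[
    \KL(x(\tau),x)=D_h(0,\tau)=\int_0^\tau u\,h''(u)\,du=:\phi(\tau),
\]
so the assertion is precisely $\phi(\tau)\ge\kappa\,\tau^2\,\phi(\bar\tau)$ for all $\tau\in(0,\bar\tau]$. Since $\mu=\|\nabla f(x)\|_\infty>0$ forces $\nabla f(x)\neq 0$, we have $h''(u)=\la(\nabla f(x))^2,x(u)\ra>0$ for every $u$ because $x(u)\in\R^n_{++}$; hence $\phi(0)=0$, $\phi'(\tau)=\tau h''(\tau)>0$ for $\tau>0$, and no degeneracy can occur in what follows.

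The first genuine step is to note that the SCL inequality $h'''(u)\le\mu h''(u)$ — which holds because $h'''(u)=-\la(\nabla f(x))^3,x(u)\ra\le\|\nabla f(x)\|_\infty\,h''(u)=\mu h''(u)$, again using $x(u)>0$ — is equivalent to the monotonicity statement that $u\mapsto e^{-\mu u}h''(u)$ is non-increasing, since its derivative is $e^{-\mu u}\big(h'''(u)-\mu h''(u)\big)\le 0$. The other half of the SCL condition is not needed.

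The core of the argument is to introduce the comparison function $g(\tau):=\int_0^\tau u\,e^{\mu u}\,du=\mu^{-2}\big(e^{\mu\tau}(\mu\tau-1)+1\big)$, which satisfies $g(\bar\tau)=1/(2\kappa)$ by the definition of $\kappa$, and to show that $\tau\mapsto\phi(\tau)/g(\tau)$ is non-increasing on $(0,\bar\tau]$. This is the ratio-of-integrals (monotone L'Hôpital) principle applied to $\phi(\tau)=\int_0^\tau (u e^{\mu u})\,(e^{-\mu u}h''(u))\,du$: a direct computation shows that the numerator $\phi'g-\phi g'$ of $(\phi/g)'$ equals
\[
    \tau\int_0^\tau (u e^{\mu u})\,(e^{\mu\tau})\,\big(e^{-\mu\tau}h''(\tau)-e^{-\mu u}h''(u)\big)\,du,
\]
which is $\le 0$ because $e^{-\mu\,\cdot}\,h''$ is non-increasing (so the bracket is $\le 0$) and the remaining factor is non-negative. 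Hence $\phi(\tau)/g(\tau)\ge\phi(\bar\tau)/g(\bar\tau)$, i.e. $\phi(\tau)\ge\frac{g(\tau)}{g(\bar\tau)}\phi(\bar\tau)$.

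To conclude, I would use the elementary bound $g(\tau)=\int_0^\tau u e^{\mu u}\,du\ge\int_0^\tau u\,du=\tau^2/2$ (since $e^{\mu u}\ge 1$) together with $\phi(\bar\tau)=\KL(x(\bar\tau),x)\ge 0$ and $g(\bar\tau)=1/(2\kappa)$ to get $\phi(\tau)\ge\frac{\tau^2/2}{g(\bar\tau)}\phi(\bar\tau)=\kappa\,\tau^2\,\phi(\bar\tau)$; dividing by $\tau^2$ and translating $\phi$ back to $\KL(\cdot,x)$ finishes the proof. The one non-routine ingredient is spotting the comparison function $g$ and recognizing that the one-sided SCL bound is exactly what makes $e^{-\mu u}h''(u)$ monotone; everything after that is the ratio-monotonicity lemma plus $e^{\mu u}\ge 1$. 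As a consistency check I would verify that the bound is tight as $\tau\downarrow 0$: for the extremal profile $h''(u)=h''(0)e^{\mu u}$ both sides tend to $\frac{1}{2}h''(0)$, which pins down the precise constant $\kappa$.
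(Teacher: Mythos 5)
Your proof is correct, and it is genuinely more than what the paper provides: the paper's ``proof'' of this lemma is a bare citation to \cite[Proposition 3.1]{Li_Cevher:2019} combined with \cref{lm:Kl-via-SCL}, whereas you supply a complete, self-contained derivation. The key identities all check out: integration by parts gives $D_h(0,\tau)=h(0)-h(\tau)+\tau h'(\tau)=\int_0^\tau u\,h''(u)\,du=\phi(\tau)$; the one-sided bound $h'''\le \mu h''$ (which, as you note, is the only half of self-concordant-likeness needed) is exactly the statement that $u\mapsto e^{-\mu u}h''(u)$ is non-increasing; the computation
\begin{equation*}
\phi'(\tau)g(\tau)-\phi(\tau)g'(\tau)=\tau\int_0^\tau u\,e^{\mu u}e^{\mu\tau}\bigl(e^{-\mu\tau}h''(\tau)-e^{-\mu u}h''(u)\bigr)\,du\le 0
\end{equation*}
is right; and $g(\bar\tau)=\mu^{-2}\bigl(e^{\mu\bar\tau}(\mu\bar\tau-1)+1\bigr)=1/(2\kappa)$ together with $g(\tau)\ge\tau^2/2$ closes the argument. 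Your route via the monotonicity of the ratio $\phi/g$ is sharper than the naive alternative of plugging the two-sided envelope $h''(0)e^{-\mu u}\le h''(u)\le h''(0)e^{\mu u}$ into both occurrences of $\KL$ (that version does not yield the claimed constant), and your tightness check at $\tau\downarrow 0$ confirms you recover exactly the $\kappa$ of the statement. What the paper's approach buys is brevity; what yours buys is a verifiable proof of the imported ingredient, making the lemma independent of the external reference.
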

\begin{proof}
    The statement is a consequence of the equivalent formulation of \cite[Proposition 3.1]{Li_Cevher:2019} with \cref{lm:Kl-via-SCL} and the self-concordant-likeness of $h$.
\end{proof}
Lastly, recall the approximation of general Bregman divergences
\begin{equation} \label{eq:quad-approx-Dvphi}
        D_\phi(y,x) = \frac{1}{2} \sum_{i,j} g_{ij}(y) (y_i - x_i)(y_j - x_j) + o(\|y-x\|^2)
\end{equation}
for $g_{ij}(x) = \frac{\partial^2}{\partial x_i \partial x_j} D_\phi(x,z) _{z = x}$, cf. \cite[Chapter 3.2]{Amari:2000}. For the $\KL$-divergence we get the following corollary.
\begin{corollary}[Pinsker's type inequality for positive orthant]\label{cor:1norm-Kl-Upper-bound}
    For $x \in \mc{M}$ and $\tau_x$ sufficiently small, there exists a constant $c > 0$ such that
    \begin{equation*}
        \|\xt - x \|_1 \leq \sqrt{2c} \sqrt{\KL(\xt, x)} \quad \forall \tau \in [0, \tau_x]
    \end{equation*}
\end{corollary}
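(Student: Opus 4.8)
The plan is to combine the local quadratic approximation of the KL-divergence in \eqref{eq:quad-approx-Dvphi} with the explicit form of the Poisson Fisher-Rao metric \eqref{eq:Poisson-geometry}. The idea is that, for $\tau$ small, $x(\tau)$ stays in a compact neighborhood of $x$ inside $\R^n_{++}$, on which the metric components $g_{ii}(x(\tau)) = 1/x_i(\tau)$ are uniformly bounded below by a positive constant. This bound turns the leading quadratic term of the Bregman expansion into a genuine squared-norm lower bound, which — after absorbing the $o(\|\cdot\|^2)$ remainder — yields the claimed inequality.

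More concretely, first I would fix $x \in \mc{M}$ and note that $\tau \mapsto x(\tau) = x\exp(-\tau\nabla f(x))$ is continuous with $x(0) = x$, so there is $\tau_x > 0$ and a compact set $K \subset \R^n_{++}$ with $x(\tau) \in K$ for all $\tau \in [0,\tau_x]$; on $K$ set $M := \max_{y\in K}\max_i y_i < \infty$, so that $g_{ii}(y) = 1/y_i \geq 1/M$ for all $y \in K$. Second, I would apply \eqref{eq:quad-approx-Dvphi} with $\phi = \psi$, $y = x(\tau)$, using $g_{ij}(x(\tau)) = \delta_{ij}/x_i(\tau)$, to get
\begin{equation*}
\KL(x(\tau), x) = \frac12 \sum_i \frac{(x_i(\tau) - x_i)^2}{x_i(\tau)} + o(\|x(\tau)-x\|^2) \geq \frac{1}{2M}\|x(\tau)-x\|_2^2 + o(\|x(\tau)-x\|^2).
\end{equation*}
Third, shrinking $\tau_x$ if necessary so that the remainder is dominated by, say, half the quadratic term, and using $\|\cdot\|_1 \leq \sqrt{n}\,\|\cdot\|_2$, I would obtain $\KL(x(\tau),x) \geq \frac{1}{4Mn}\|x(\tau)-x\|_1^2$, i.e. the claim with $c = 2Mn$ (and the implicit constant in $o(\cdot)$ can alternatively be controlled via Taylor's theorem with an explicit remainder on $K$).

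The main obstacle — though mild — is making the passage from the asymptotic $o(\|x(\tau)-x\|^2)$ statement to a uniform bound over the whole interval $[0,\tau_x]$ rigorous: the little-$o$ in \eqref{eq:quad-approx-Dvphi} is an $x(\tau)\to x$ statement, and one must ensure the neighborhood on which it gives the desired domination can be taken to contain the tail $x([0,\tau_x])$. This is handled by instead using Taylor's theorem for the smooth function $\tau \mapsto \KL(x(\tau),x)$ directly — it vanishes to second order at $\tau = 0$ with $\frac{d^2}{d\tau^2}\KL(x(\tau),x)\big|_{\tau=0} = \la(\nabla f(x))^2, x\ra > 0$ (unless $\nabla f(x) = 0$, a trivial case) — together with the continuity of the third derivative on $[0,\tau_x]$, which gives the explicit remainder control and a clean choice of $\tau_x$ and $c$.
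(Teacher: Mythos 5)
Your proof is correct and follows the same basic strategy as the paper -- lower-bounding $\KL(x(\tau),x)$ by the leading quadratic term of the Bregman expansion \eqref{eq:quad-approx-Dvphi} and using compactness of $\{x(\tau):\tau\in[0,\tau_x]\}$ to extract a uniform constant -- but the two arguments diverge in how the $g$-norm is related to the $\ell_1$-norm. The paper applies Jensen's (Cauchy--Schwarz) inequality, $\|y-x\|_1^2=\bigl(\sum_i \tfrac{|y_i-x_i|}{\sqrt{y_i}}\sqrt{y_i}\bigr)^2\le \|y-x\|_{g(y)}^2\,\|y\|_1$, and then bounds $\|x(\tau)\|_1\le c$ on the compact interval; this is the classical Pinsker-type manipulation and yields a dimension-free constant. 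You instead bound the metric coefficients below by $1/M$ on a compact neighborhood and invoke $\|\cdot\|_1\le\sqrt{n}\|\cdot\|_2$, which is equally valid but produces the dimension-dependent constant $c=2Mn$. On the other hand, your treatment of the remainder is more careful than the paper's: the paper silently converts the asymptotic identity \eqref{eq:quad-approx-Dvphi} into the inequality $\KL(y,x)\ge\frac12\|y-x\|_{g(y)}^2$, which is not an exact global inequality (already in one dimension $t-1-\log t<\tfrac12(1-t)^2$ for $t>1$), whereas you explicitly absorb the $o(\|x(\tau)-x\|^2)$ term by shrinking $\tau_x$ and note the cleaner alternative of applying Taylor's theorem directly to $\tau\mapsto\KL(x(\tau),x)$, whose second derivative at $0$ is $h''(0)=\la(\nabla f(x))^2,x\ra$. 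Since the corollary only claims the bound for $\tau_x$ sufficiently small and allows an unspecified $c>0$, both routes reach the stated conclusion, but yours is the more rigorous of the two on this point.
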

\begin{proof}
    Since $\KL$-divergence induces the Poisson geometry we get
    \begin{equation*}
        \KL(y,x) \overset{\eqref{eq:quad-approx-Dvphi}}{\geq} \frac{1}{2} \|y-x\|^2_{g(y)} \geq  \frac{1}{2} \frac{\|y - x\|_1^2}{\|y\|_1},
    \end{equation*}
    whereas we use Jensen's inequality for the second inequality. Set $y = \xt$. Since $x(\tau)$ is continuous and we take $\tau$ on a closed interval, there exists a constant $c > 0$ that upper bounds $\|\xt\|_1 \leq c$ and we obtain the statement by rearranging the terms.
\end{proof}
Before we prove \cref{thm:RArmijo-terminates}, we remind that the formulation of EG as an MD update \eqref{eq:BPGD} implies
\begin{equation}\label{eq:LemmaB.1}
    \la \nabla f(x), \xt - x \ra \leq - \frac{D_\psi(\xt, x)}{\tau} = - \frac{\KL(\xt, x)}{\tau}.
\end{equation}
\begin{proof}[of Thm.~\ref{thm:RArmijo-terminates}]
    Since $x = \xt$ for $\mu = 0$ and thus the line search terminates, assume $\mu > 0$ henceforth.
    The statement is equivalent to 
    \begin{equation*} \label{eq:RArmijo-form2}
        f(\xt) - f(x) \leq - \sigma \tau h^{\prime \prime}(0), \quad \forall \tau \in [0, \tau_x],
    \end{equation*}
    since 
    \begin{equation*}
        \|\nabla_{\mc{M}} f(x) \|^2_x = \la x \nabla f(x), \nabla f(x) \ra = \la (\nabla f(x))^2, x(0) \ra = h^{\prime \prime}(0).
    \end{equation*}
    For $\tau > 0$ by the mean value theorem there exists a point $y$ on the euclidean segment between $\xt$ and $x$ such that
    \begin{equation*}
        f(\xt) - f(x) = \la \nabla f(y), \xt - x \ra.
    \end{equation*}
    By adding a zero and rearranging, the statement can be reformulated as
    \begin{equation*}
        \la \nabla f(y) - \nabla f(x), \xt - x \ra + \sigma \tau h^{\prime \prime}(0) \leq - \la \nabla f(x), \xt - x \ra \quad \forall \tau \in [0, \tau_x].
    \end{equation*}
    By \eqref{eq:LemmaB.1} it would suffice to show
     \begin{equation} \label{eq:Thm2-equiv-formulation}
        \la \nabla f(y) - \nabla f(x), \xt - x \ra + \sigma \tau h^{\prime \prime}(0) \leq \frac{\KL(\xt, x)}{\tau} \quad \forall \tau \in [0, \tau_x].
    \end{equation}
    Set $\eta := \min_{i \in [n]} |\partial_i f(x) |$. For $\tau_x$ small enough, the Taylor expansion of $\xt$ yields
    \begin{equation*}
        \|\xt - x \|_1 \geq \tau \|x \nabla f(x) \|_1 \geq \tau \eta \|x\|_1 \quad \forall \tau \in [0, \tau_x].
    \end{equation*}
    With this, we upper bound $h^{\prime \prime}(0)$:
    \begin{equation*}
        \tau h^{\prime \prime}(0) \leq \tau \mu^2 \|x\|_1 \leq \frac{\mu^2}{\eta} \|\xt - x \|_1.
    \end{equation*}
    Furthermore, we use Hölder's inequality to get
    \begin{align*}
        \la \nabla f(y) - \nabla f(x), &\xt - x \ra + \sigma \tau h^{\prime \prime}(0)  \\ 
        &\leq \la \nabla f(y) - \nabla f(x), \xt - x \ra + \sigma \frac{\mu^2}{\eta} \|\xt - x \|_1 \\
        &\leq \Big(\| \nabla f(y) - \nabla f(x) \|_\infty + \sigma \frac{\mu^2}{\eta} \Big)\|\xt - x \|_1 \\
        &\overset{\text{Cor.}~\ref{cor:1norm-Kl-Upper-bound}}{\leq} \sqrt{2c} \Big(\| \nabla f(y) - \nabla f(x) \|_\infty + \sigma \frac{\mu^2}{\eta} \Big) \sqrt{\KL(\xt, x)}
    \end{align*}
    for the left hand side of the inequality \eqref{eq:Thm2-equiv-formulation}.
    \begin{equation*}
        \sqrt{\KL(\xt, x)} \sqrt{\kappa \KL(x(\bar \tau), x)} \leq \frac{\KL(\xt, x)}{\tau} \quad \forall \tau \in [0, \tau_x]
    \end{equation*}
    for some $\kappa > 0$. Thus, the statement follows if
   \begin{equation*}
        \sqrt{2c}\Big( \| \nabla f(y) - \nabla f(x) \|_\infty + \sigma \frac{\mu^2}{\eta} \Big) \leq \sqrt{\kappa \KL(x(\bar \tau), x)} \quad \forall \tau \in [0, \tau_x].
    \end{equation*}
    Since $\nabla f$ is continuous and the right hand side is strictly positive, the inequality holds for a small enough $\tau_x$, proving the statement.
\end{proof}

\noindent
\textbf{Exactness of the Riemannian Armijo condition.}
The output of \eqref{eq:R-Armijo} aims to approximate the exact solution of the line search $\tau_x^{\text{ex}}$ satisfying
\begin{equation*}
    \tau_x^{\text{ex}} = \argmin_{\tau \in \R} f(\xt). 
\end{equation*}
The first optimality condition $\frac{d}{d\tau} f(\xt) \big|_{\tau = \tau_x^{\text{ex}}} = 0$ yields the implicit form
\begin{equation} \label{eq:OC-exact-line-search}
    -\la \Rd f(x), \Rd f((x_{\tau_x^\text{ex}})) \ra_x = 0,
\end{equation}
since
\begin{align*}
    \frac{d}{d\tau} f(\xt) &= \la \nabla f(\xt), \dot x(\tau) \ra = - \la \nabla f(\xt), \nabla f(x) \xt \ra \\
    &= - \Big \la \xt \nabla f(\xt), \frac{x}{x} \nabla f(x) \Big\ra = - \la \Rd f(x), \Rd f(\xt) \ra.
\end{align*}
For $\tau \geq 0$, define $\Delta_x(\tau) :=  -\la \Rd f(x), \Rd f(\xt) \ra_x$. The following lemma relates the optimality condition of the exact line search to second-order information of $f$.

\begin{lemma} \label{lm:exactness-approx}
    For $\tau$ sufficiently small, the exactness condition \eqref{eq:OC-exact-line-search} is approximated by
    \begin{equation*}
        \Delta_x(\tau) = - \|\Rd f(x) \|_x^2 + \tau \la \Rd f(x), H(x) \ra_x + \mathcal{O}(\tau^2)
    \end{equation*}
    with  $H(x) = x (\nabla f(x))^2 + x \nabla^2f(x)[\Rd f(x)]$.
\end{lemma}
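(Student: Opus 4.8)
The plan is to derive the expansion by a first-order Taylor expansion of $\Delta_x(\tau)$ at $\tau=0$. First I would convert $\Delta_x$ into a plain Euclidean pairing. Using $g(x)=\Diag(1/x)$, the identity $\Rd f(\xt)=\Diag(\xt)\nabla f(\xt)=\xt\,\nabla f(\xt)$, and the definition $\la u,v\ra_x=\la u,g(x)v\ra$, one gets
\begin{equation*}
  \Delta_x(\tau)=-\la \Rd f(x),\Rd f(\xt)\ra_x=-\la \nabla f(x),\,\xt\,\nabla f(\xt)\ra ,
\end{equation*}
so that $\Delta_x(\tau)=-\la\nabla f(x),G(\tau)\ra$ with $G(\tau):=\xt\,\nabla f(\xt)$. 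Since $f$ is smooth and $\tau\mapsto\xt=x\exp(-\tau\nabla f(x))$ is smooth with values in the open set $\mc{M}=\R^n_{++}$ for every $\tau$, the map $G$ is smooth near $0$ and Taylor's theorem gives $G(\tau)=G(0)+\tau G'(0)+\mathcal{O}(\tau^2)$.

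Next I would compute the two coefficients. Plainly $G(0)=x\,\nabla f(x)=\Rd f(x)$. For $G'(0)$, differentiate the Hadamard product $G(\tau)=\xt\,\nabla f(\xt)$ by the product and chain rules, using the already established identity $\dot x(\tau)=-\nabla f(x)\,\xt$ (hence $\dot{x}(0)=-x\nabla f(x)=-\Rd f(x)$) together with $\tfrac{d}{d\tau}\nabla f(\xt)=\nabla^2 f(\xt)\,\dot x(\tau)$:
\begin{equation*}
  G'(0)=\dot{x}(0)\,\nabla f(x)+x\,\nabla^2 f(x)[\dot{x}(0)]
  =-x(\nabla f(x))^2-x\,\nabla^2 f(x)[\Rd f(x)]=-H(x).
\end{equation*}

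Finally I would assemble the pieces. Substituting the expansion of $G$ yields $\Delta_x(\tau)=-\la\nabla f(x),\Rd f(x)\ra-\tau\la\nabla f(x),G'(0)\ra+\mathcal{O}(\tau^2)$. By the defining property of the Riemannian gradient, $\la\nabla f(x),v\ra=\la\Rd f(x),v\ra_x$ for every $v\in T_x\mc{M}$; applying this with $v=\Rd f(x)$ turns the leading term into $-\|\Rd f(x)\|_x^2$, and applying it with $v=G'(0)=-H(x)$ turns the linear term into $+\tau\la\Rd f(x),H(x)\ra_x$, which is exactly the asserted identity. I expect the only real difficulty to be the careful bookkeeping of the componentwise (Hadamard) products in $G$ and in the chain rule for $\nabla f(\xt)$; the $\mathcal{O}(\tau^2)$ remainder is legitimate without further work since $\xt$ remains in the open manifold $\mc{M}$ for small $\tau$, so no boundary obstruction arises.
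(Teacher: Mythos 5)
Your proof is correct and follows essentially the same route as the paper's: a first-order Taylor expansion of $\Rd f(\xt)=\xt\,\nabla f(\xt)$ using $\dot x(0)=-\Rd f(x)$, giving $\Rd f(\xt)=\Rd f(x)-\tau H(x)+\mc{O}(\tau^2)$, followed by insertion into $\Delta_x(\tau)$. The only difference is presentational — you carry out the bookkeeping in the Euclidean pairing and convert back via the defining property of the Riemannian gradient, which the paper leaves implicit.
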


\begin{proof}
    The statement is a consequence of Taylor expansion of $\Rd f(\xt) = \xt \nabla f(\xt)$ for small enough $\tau$. A direct calculation yields
    \begin{align*}
        \Rd f(\xt) = &= x \nabla f(x) + \tau \Big(\dot x(\tau)\nabla f(\xt) + \xt \nabla^2f(\xt)[\dot x(\tau)] \Big) \Big|_{\tau = 0} + \mc{O}(\tau^2) \\
    &= \Rd f(x) - \tau H(x) + \mc{O}(\tau^2),
    \end{align*}
    with $\dot x(\tau) = - \xt \nabla f(x)$. The claim follows by insertion in $\Delta_x(\tau)$.
\end{proof}
Consequently, the following corollary follows by rearrangement. 
\begin{corollary}[Exactness of \eqref{eq:R-Armijo}] \label{cor:RArmijo-exactness}
    If $\tau$ satisfies \eqref{eq:R-Armijo} at the point $x \in \mc{M}$, then
    \begin{equation*}
        f(\xt) - f(x) \leq \tau \Delta_x(\tau)  - \tau^2 \la \Rd f(x), H(x) \ra_x + \mc{O}(\tau^3).
    \end{equation*}
\end{corollary}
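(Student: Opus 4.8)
The plan is to read the estimate off directly from the second–order expansion of $\Delta_x$ in \cref{lm:exactness-approx}, using the Armijo inequality only to pin down the regime in which $\tau$ is small enough for that expansion to apply. Two routes give the same bound, and I would run the first, since it is precisely the ``rearrangement'' alluded to after \cref{lm:exactness-approx}. Recall from the proof of \cref{thm:RArmijo-terminates} that $\|\Rd f(x)\|_x^2 = \la (\nabla f(x))^2, x\ra = h''(0) = -\Delta_x(0)$, so that a step $\tau$ satisfying \eqref{eq:R-Armijo} at $x$ obeys $f(\xt) - f(x) \le -\sigma\tau\|\Rd f(x)\|_x^2$.

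First I would rewrite \cref{lm:exactness-approx} in the form
\begin{equation*}
 -\|\Rd f(x)\|_x^2 \;=\; \Delta_x(\tau) - \tau\,\la \Rd f(x), H(x)\ra_x + \mathcal{O}(\tau^2),
\end{equation*}
valid for $\tau$ in the range where the Taylor expansion of $\Rd f(\xt)=\xt\nabla f(\xt)$ is controlled — a regime that is nonempty by \cref{thm:RArmijo-terminates} and in which \eqref{eq:R-Armijo} can indeed be met. I would then multiply this identity by $\sigma\tau$ and insert it into the Armijo inequality, absorbing the resulting $\sigma\tau\cdot\mathcal{O}(\tau^2)$ into $\mathcal{O}(\tau^3)$, which produces $f(\xt)-f(x) \le \sigma\tau\,\Delta_x(\tau) - \sigma\tau^2\,\la\Rd f(x), H(x)\ra_x + \mathcal{O}(\tau^3)$, i.e. the asserted bound with the slope parameter $\sigma\in(0,1)$ kept explicit. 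As a consistency check I would also run the second route: since $\frac{d}{d\tau}f(\xt)=\Delta_x(\tau)$ by the computation preceding \eqref{eq:OC-exact-line-search}, the fundamental theorem of calculus gives $f(\xt)-f(x)=\int_0^\tau\Delta_x(s)\,ds$, and plugging in the expansion of \cref{lm:exactness-approx} and comparing against $\tau\Delta_x(\tau)$ reproduces the same estimate up to the numerical constant multiplying $\tau^2$.

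I expect the only non-routine part to be the bookkeeping of constants and remainders: one must know that the $\mathcal{O}(\tau^2)$ term in \cref{lm:exactness-approx} is uniform over the closed, bounded range of admissible steps — which follows from smoothness of $f$ together with continuity of $\tau\mapsto\xt$, exactly as boundedness of $\|\xt\|_1$ was exploited in \cref{cor:1norm-Kl-Upper-bound} — and one must be careful about which form of the $\tau^2$-coefficient is being claimed, namely the Armijo-based $-\sigma\tau^2\la\Rd f(x),H(x)\ra_x$ versus the exact Taylor value $-\frac{1}{2}\tau^2\la\Rd f(x),H(x)\ra_x$, and correspondingly which quantities get swept into $\mathcal{O}(\tau^3)$. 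Reconciling these and stating the remainder at the right order is the crux; everything else is substitution and collecting powers of $\tau$.
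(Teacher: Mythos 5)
Your derivation is the one the paper intends: \cref{cor:RArmijo-exactness} carries no proof beyond the phrase ``follows by rearrangement,'' and the rearrangement is exactly your first route --- substitute $-\|\Rd f(x)\|_x^2 = \Delta_x(\tau) - \tau\la \Rd f(x), H(x)\ra_x + \mathcal{O}(\tau^2)$ from \cref{lm:exactness-approx} into the Armijo decrease $f(\xt)-f(x)\le -\sigma\tau\|\Rd f(x)\|_x^2$. The discrepancy you flag at the end is real and is not resolved by the paper: the rearrangement yields the bound with a factor $\sigma$ multiplying both $\tau\Delta_x(\tau)$ and the $\tau^2$ term, and since $\Delta_x(\tau) = -\|\Rd f(x)\|_x^2 + \mathcal{O}(\tau)$ is negative for small $\tau$, the difference $(1-\sigma)\tau\Delta_x(\tau)$ is of order $\tau$, so the $\sigma$ can be neither dropped nor absorbed into $\mathcal{O}(\tau^3)$; likewise your integral route gives an \emph{equality} with coefficient $\tfrac12$ rather than $1$ on the $\tau^2$ term, and the gap $\tfrac12\tau^2\la\Rd f(x),H(x)\ra_x$ is not $\mathcal{O}(\tau^3)$ either. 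In short, your argument is sound and matches the paper's approach, but it proves the corollary only with the factor $\sigma$ (or, via the second route, with $\tfrac12$ in place of $1$) restored --- the statement as printed appears to have lost that constant. Your additional care about uniformity of the $\mathcal{O}(\tau^2)$ remainder over the admissible step range goes beyond what the paper records and is the right thing to check.
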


\begin{remark}
    While first derivatives exist, construction of second derivatives (Hessians) on Riemannian manifolds is dependent on a choice of a connection, and as such is not canonical, see \cite{Jost2005}. Thus, there are various notions of generalizing the Euclidean Hessian \cite{Boumal2023,Absil:2008aa}, which are equivalent for the $g$-connection but lead to differing definitions when extended to other types of connections.
    We choose the definition $\Hess f(x)[v] = \mc{D}_v \Rd f(x)$ for a given connection $\mc{D}$. Thus, we can write the tangent vector $H(x)$ as the $m$-Hessian
    \begin{equation*}
        \Hess^m f(x)[\Rd f(x)] = x (\nabla f(x))^2 + x \nabla^2f(x)[\Rd f(x)].
    \end{equation*}
    The definition of a Riemannian Hessian is used in optimization to establish a notion of geodesic convexity, \cite{Absil:2008aa,Boumal2023}. An examination of the consequence of geodesic convexity with respect to a non Riemannian-connection and the performance of the Armijo line search are beyond the scope of this paper.
\end{remark}

\noindent
\textbf{Accelerating EG.}
We shortly highlight a standard acceleration method for RGD, applied to the EG update. While RGD only exploits steepest descent direction at the current iteration, the geometric conjugate gradient (CG) descent method updates the classical descent direction $v_k = - \Rd f^{(k)}$ by information obtained from previous iterates. This requires the \emph{parallel transport} of tangent vectors in $T_{x^{(k-1)}} \mc{M}$ to vectors in $T_{x^{(k)}} \mc{M}$. We refer e.g. to \cite[Section 3]{Jost2005} for details. We take the parallel transport as the differential of the $e$-Exp map
\begin{equation*}
    P^e_{x \to x^\prime}: T_x\mc{M} \to T_{x^{\prime}} \mc{M} \qquad v \mapsto d \Exp_x(u)v =  g(x^\prime)^{-1} g(x) v = \frac{x^\prime}{x} v,
\end{equation*}
for $x^\prime = \Exp_x(u)$, cf. \cite[Lemma 4.3]{Raus2024}. The CG-based accelerated EG update writes
\begin{subequations} \label{eq:Poi-CG}
    \begin{align}
        x^{(k+1)} &= \Exp_{x^{(k)}}^e(\tau_k v^{(k)}) = x^{(k)} \exp(\tau_k v^{(k)}) \\
        v^{(k+1)} &= - \Rd f(x^{(k+1)}) + \beta_{k+1} P_{x^{(k)} \to x^{(k+1)}}^e(v^{(k)}) \\
        &= - \Rd f(x^{(k+1)}) + \beta_{k+1} v^{(k)} \exp(\tau_k v^{(k)})
    \end{align}
\end{subequations}
where $\tau_k > 0$ is the step size, and $\beta_{k+1} \in \R$ is a suitably chosen parameter. Among the existing choices for $\beta_{k+1}$ we achieved our best numerical results for the geometric version of Polak-Ribiere (PR) \cite{Polak_Ribiere_1969} type CG
\begin{equation}\label{eq:Poi-PR}
    \beta^{\text{PR}}_{k+1} = \frac{\la \Rd f^{(k+1)}, u^{(k)} \ra_{x^{(k+1)}}}{\| \Rd f^{(k)} \|^2_{x^{(k)}}},
\end{equation}
where $u^{(k)} := \Rd f(x^{(k+1)}) - v^{(k)} \exp(\tau_k v^{(k)})$.
Global convergence of geometric CG methods with various choices for the parameter $\beta$ has been extensively studied; see \cite[Table 1]{Sakai_Sato_Iiduka_2023} for an overview. Notably, this includes a hybrid PR-type method analyzed in \cite{Sakai_Iiduka_2021}. However, existing convergence results for geometric CG methods rely on Lipschitz-type assumptions on the cost function $f$ and employ strong Wolfe conditions for inexact line searches. 
The convergence analysis of geometric CG-based accelerated EG under weaker assumptions remains an open question, which we address in future work.

\section{Experiments}
\label{sec:experiments}
\begin{figure}[ht]
\centering
\begin{minipage}[t]{0.15\columnwidth}
    \includegraphics[width=1.05\textwidth]{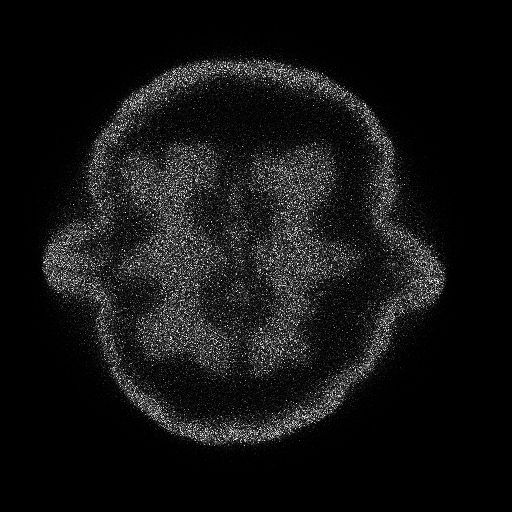}
\end{minipage}%
\hspace{0.1pt}
\begin{minipage}[t]{0.15\columnwidth}
    \includegraphics[width=1.05\textwidth]{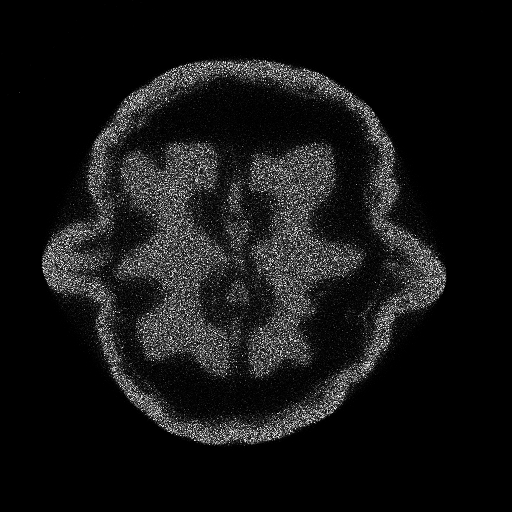}
\end{minipage}%
\hspace{0.1pt}
\begin{minipage}[t]{0.15\columnwidth}
    \includegraphics[width=1.05\textwidth]{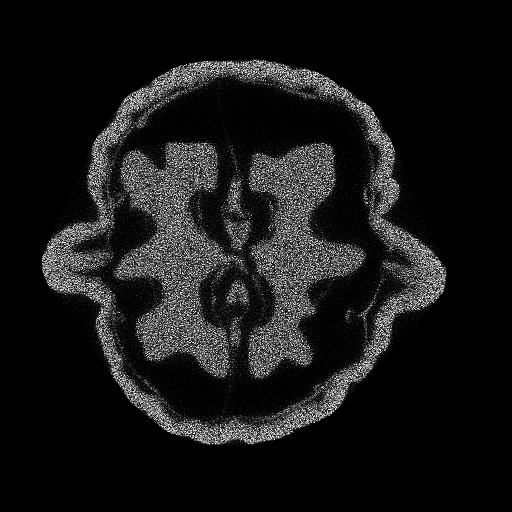}
\end{minipage}%
\hspace{0.1pt}
\begin{minipage}[t]{0.15\columnwidth}
    \includegraphics[width=1.05\textwidth]{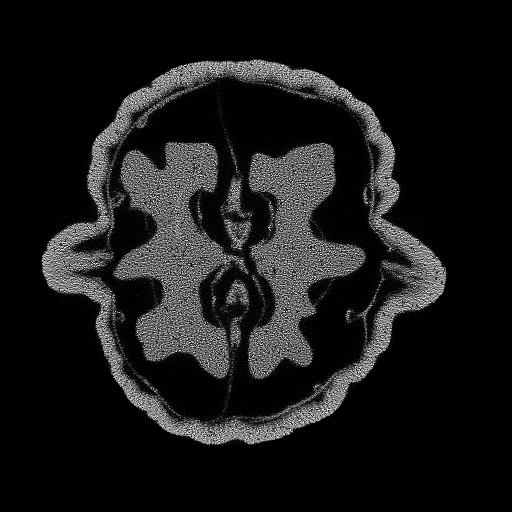}
\end{minipage}%
\hspace{0.1pt}
\begin{minipage}[t]{0.15\columnwidth}
    \includegraphics[width=1.05\textwidth]{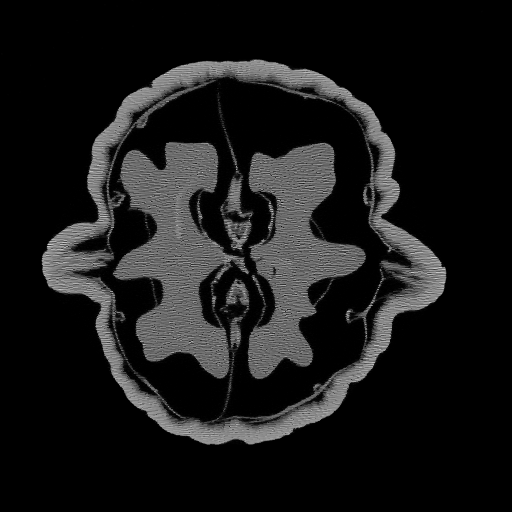}
\end{minipage}

\vspace{0.15cm}

\begin{minipage}[t]{0.15\columnwidth}
    \includegraphics[width=1.05\textwidth]{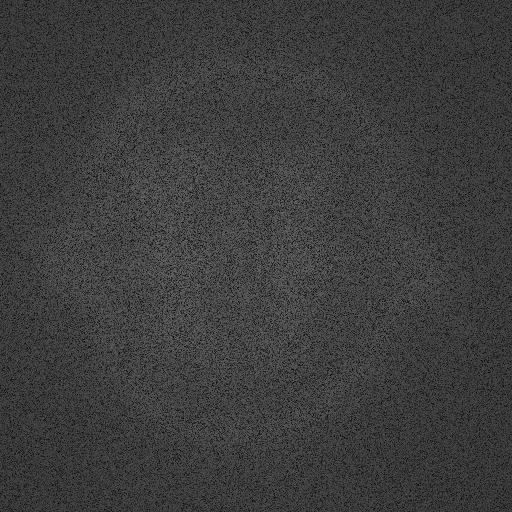}
\end{minipage}%
\hspace{0.1pt}
\begin{minipage}[t]{0.15\columnwidth}
    \includegraphics[width=1.05\textwidth]{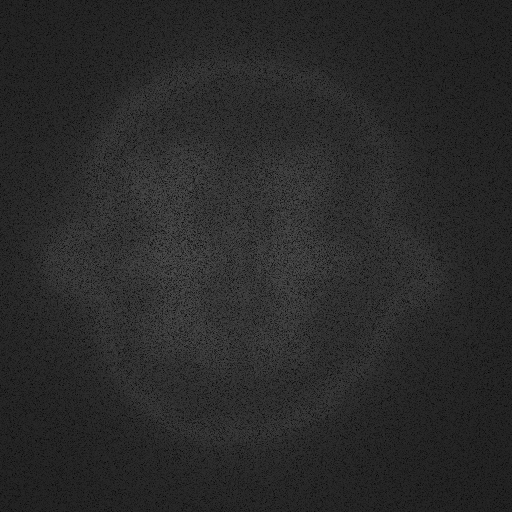}
\end{minipage}%
\hspace{0.1pt}
\begin{minipage}[t]{0.15\columnwidth}
    \includegraphics[width=1.05\textwidth]{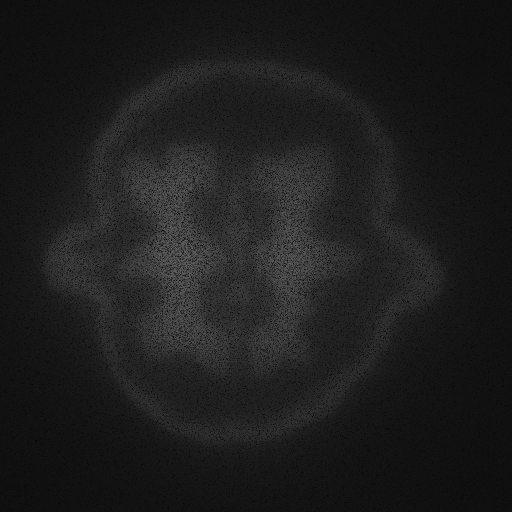}
\end{minipage}%
\hspace{0.1pt}
\begin{minipage}[t]{0.15\columnwidth}
    \includegraphics[width=1.05\textwidth]{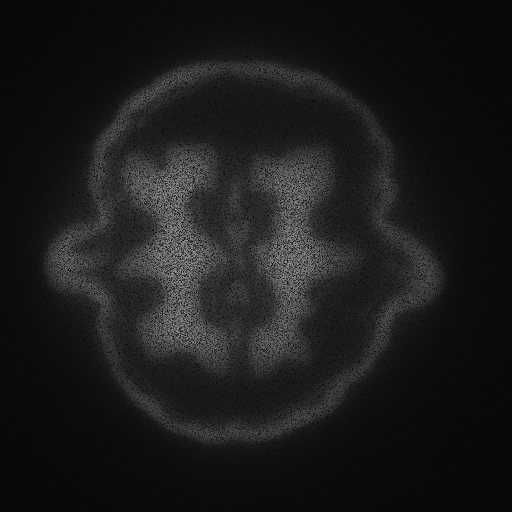}
\end{minipage}%
\hspace{0.1pt}
\begin{minipage}[t]{0.15\columnwidth}
    \includegraphics[width=1.05\textwidth]{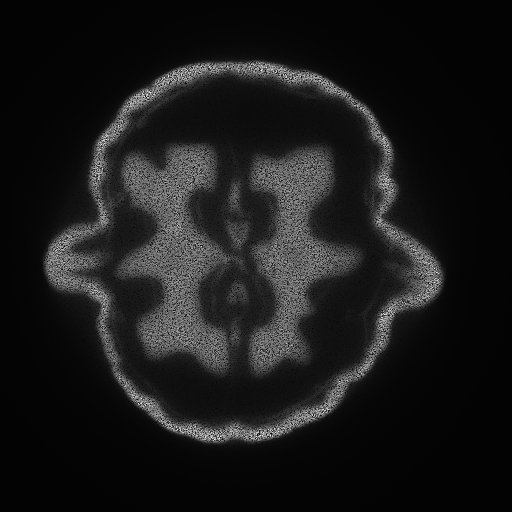}
\end{minipage}

\begin{minipage}[t]{0.15\columnwidth}
    \includegraphics[width=1.05\textwidth]{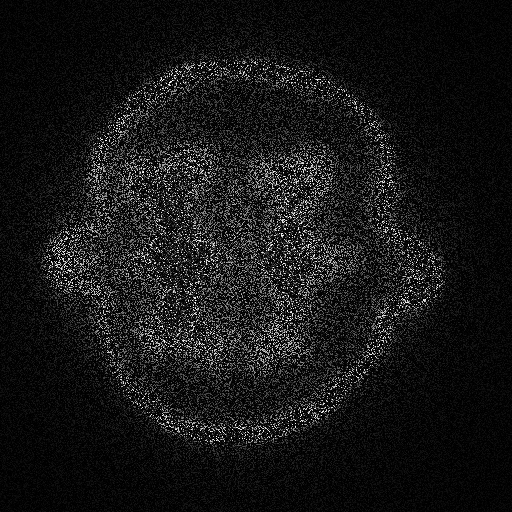}
\end{minipage}%
\hspace{0.1pt}
\begin{minipage}[t]{0.15\columnwidth}
    \includegraphics[width=1.05\textwidth]{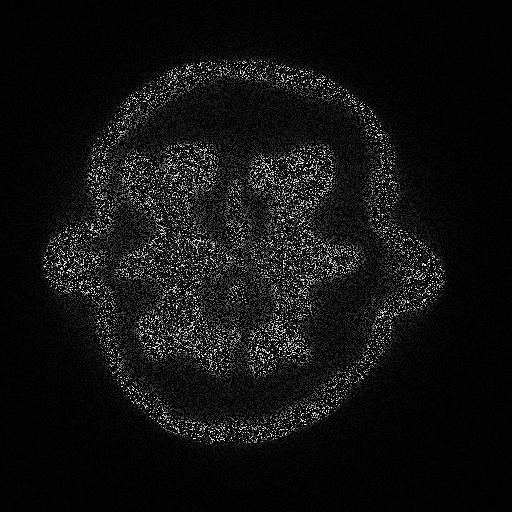}
\end{minipage}%
\hspace{0.1pt}
\begin{minipage}[t]{0.15\columnwidth}
    \includegraphics[width=1.05\textwidth]{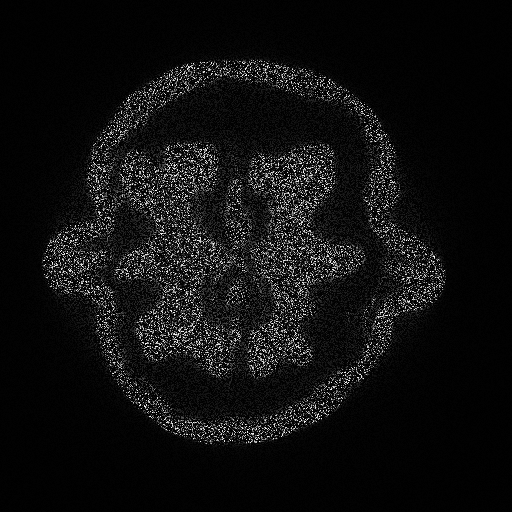}
\end{minipage}%
\hspace{0.1pt}
\begin{minipage}[t]{0.15\columnwidth}
    \includegraphics[width=1.05\textwidth]{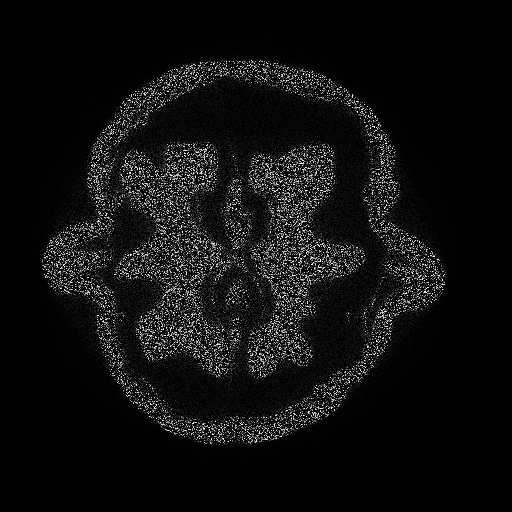}
\end{minipage}%
\hspace{0.1pt}
\begin{minipage}[t]{0.15\columnwidth}
    \includegraphics[width=1.05\textwidth]{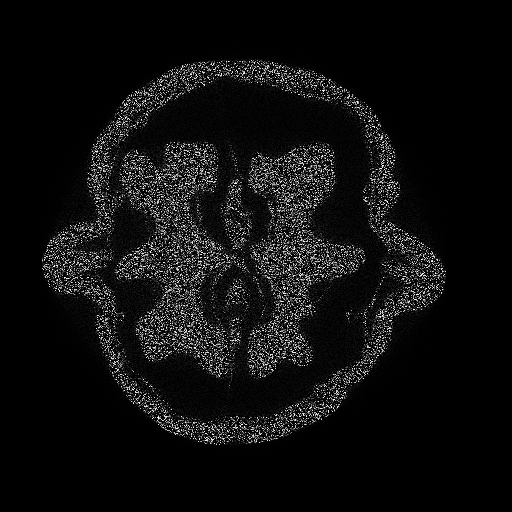}
\end{minipage}

\vspace{0.02cm} 

\begin{minipage}[t]{0.15\columnwidth}
    \includegraphics[width=1.05\textwidth]{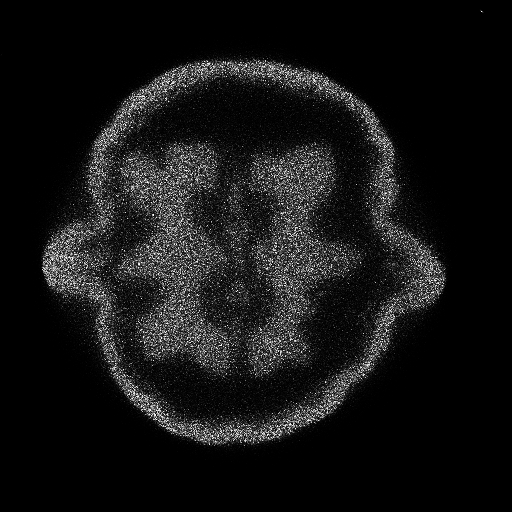}
\end{minipage}%
\hspace{0.1pt}
\begin{minipage}[t]{0.15\columnwidth}
    \includegraphics[width=1.05\textwidth]{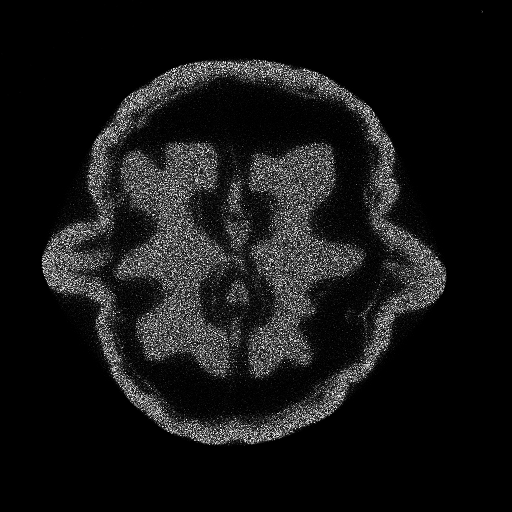}
\end{minipage}%
\hspace{0.1pt}
\begin{minipage}[t]{0.15\columnwidth}
    \includegraphics[width=1.05\textwidth]{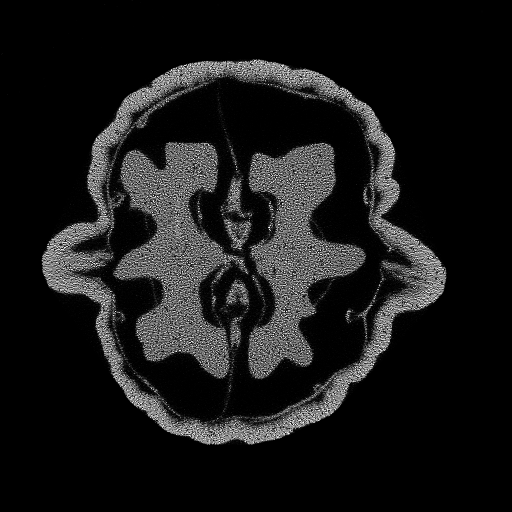}
\end{minipage}%
\hspace{0.1pt}
\begin{minipage}[t]{0.15\columnwidth}
    \includegraphics[width=1.05\textwidth]{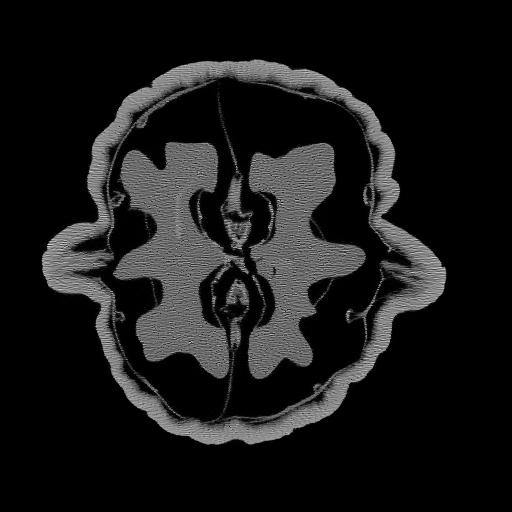}
\end{minipage}%
\hspace{0.1pt}
\begin{minipage}[t]{0.15\columnwidth}
    \includegraphics[width=1.05\textwidth]{Poi_CG_e_iterations_100.png}
\end{minipage}

\caption{\textbf{Reconstructions across iterations.} Comparison of \textbf{EG} with Armijo backtracking (top row), $g$-RGD with Armijo backtracking (second row), and IP $e$-RGD with a constant step size (third row) based on interior-point geometry, along with PR-type CG-accelerated EG for iterations $k = 10, 20, 50, 100, 300$. 
Since accelerated EG terminates at 148 iterations, its result at iteration 148 is shown for iteration 300.
}
\label{fig:walnut-reconstruction}
\end{figure}

We consider the reconstruction problem
\begin{equation*}
    f(x) = \min_{x \in \mc{M}} \KL(b,Ax) + \lambda \la L_\delta(\nabla x), \eins \ra,
\end{equation*}
which violates $L$-smoothness.  $L_{\delta}$ is the Huber function defined by $L_{\delta}(a)=\frac{1}{2}a^{2}$ for $|a|\leq\delta$ and $L_{\delta}(a)=\delta\cdot\left(|a|-\frac{1}{2}\delta\right)$ otherwise. The discrete gradient $\nabla x$ is the vector of finite differences.
We aim to recover a discretized signal $x$ from linear positive measurements $b \in {\R}^m_{++}$ using a nonnegative matrix $A \in {\R}^{m \times n}_{+}$. The cost function $f$ and its gradient are neither Lipschitz nor relatively Lipschitz smooth with respect to the negative entropy $\psi$ \eqref{eq:def-negative-entropy}, making standard MD convergence guarantees inapplicable. However, since $f$ is (Euclidean) convex, \cref{cor:convergence-loc-min} ensures global convergence of EG with Riemannian Armijo.

\noindent
\textbf{Problem setup.} 
We consider tomographic reconstruction as a test case, specifically reconstructing the walnut phantom at a resolution of $512 \times 512$ \cref{fig:walnut-reconstruction}. The phantom is flattened into a one-dimensional vector for processing. The tomographic projection matrices $A$ are generated using the ASTRA toolbox \footnote{\url{https://astra-toolbox.com/}} with parallel-beam geometry and equidistant angles in the range $[0, 2 \pi]$. The undersampling rate is set to $20\%$.

We interpret EG  as the $e$-RGD in Poisson geometry with Armijo backtracking and compare it to two other geometry-based iterative methods on the positive orthant:
\begin{enumerate}
    \item \textit{g-RGD on $(\mc{M}, g_{\superIP})$}: Implemented as a scaled EG iteration, see iteration definition \eqref{eq:IP-g-RGD}, with backtracking line search.
    \item \textit{e-RGD on $(\mc{M}, g_{\superIP})$}: Formulated as a mirror descent iteration  \cite[Proposition 4.6]{Raus2024} for the Bregman function $\vphi(x) = - \la \log x, \eins \ra$, gives:
    \begin{equation} \label{eq:MD-log-barrier}
        x^{(k+1)} = \frac{x^{(k)}}{1 - \tau_k x^{(k)} \nabla f^{(k)}}.
    \end{equation}
    In this case, $\vphi^\ast$ corresponds to the log partition function of the exponential distribution.
    Since $f$ is relative Lipschitz smooth with respect to $\vphi$, convergence of \eqref{eq:MD-log-barrier} to a global minimum given the convexity of $f$ is guaranteed for a constant step size $\tau_k \leq \frac{1}{2L}$, with $L = \|b\|_1$ as the relative Lipschitz constant \cite{Bauschke:2017aa}. Consequently, we use a constant step size for this evaluation.
    \item \textit{Poisson e-CG}: Using Armijo-line search for the step sizes $\tau_k$ and PR-type choice for $\beta_{k+1}$, s. \eqref{eq:Poi-CG} and \eqref{eq:Poi-PR}.
\end{enumerate}
\textbf{Implementation details.} 
We implemented the Poisson and interior point geometries, including their respective exponential maps, as well as the RGD and CG methods using the framework provided by the Python library Pymanopt \cite{Townsend16}. All algorithms start with the same random initialization $x^{(0)} \in \mc{M}$. 
The termination criteria follow the default settings of the Pymanopt library: a maximum of 300 iterations, a minimum gradient norm of $10^{-6}$, and a minimum step size of $10^{-10}$ for backtracking line search. In our experiments accelerated EG occasionally terminates due to reaching the minimum step size. The maximum number of iterations is typically reached first for all RGD-based algorithms.

\begin{figure}[H]
    \centering
    \begin{tabular}{cc} 
        \includegraphics[width=0.55\textwidth]{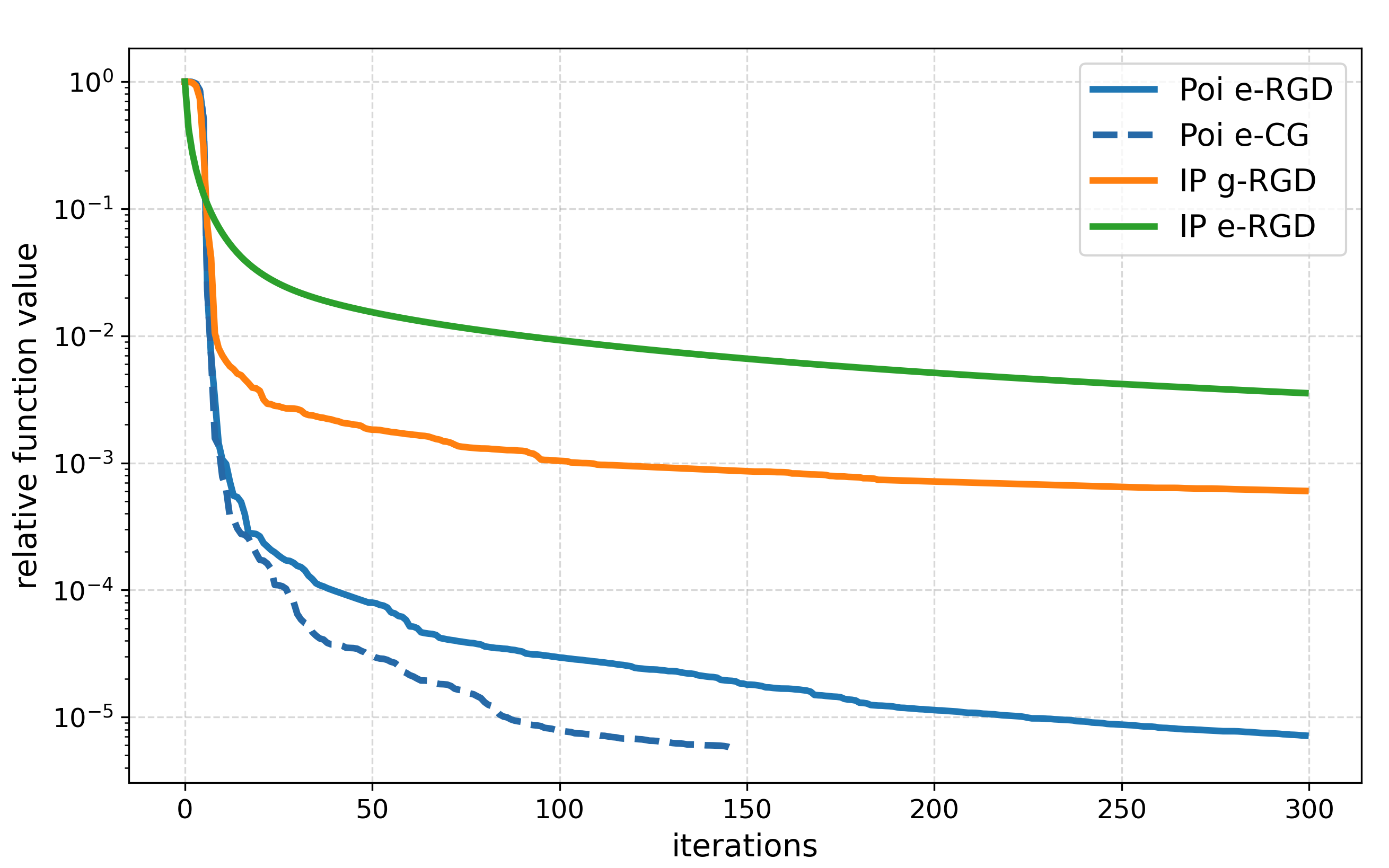}&
        \includegraphics[width=0.45\textwidth]{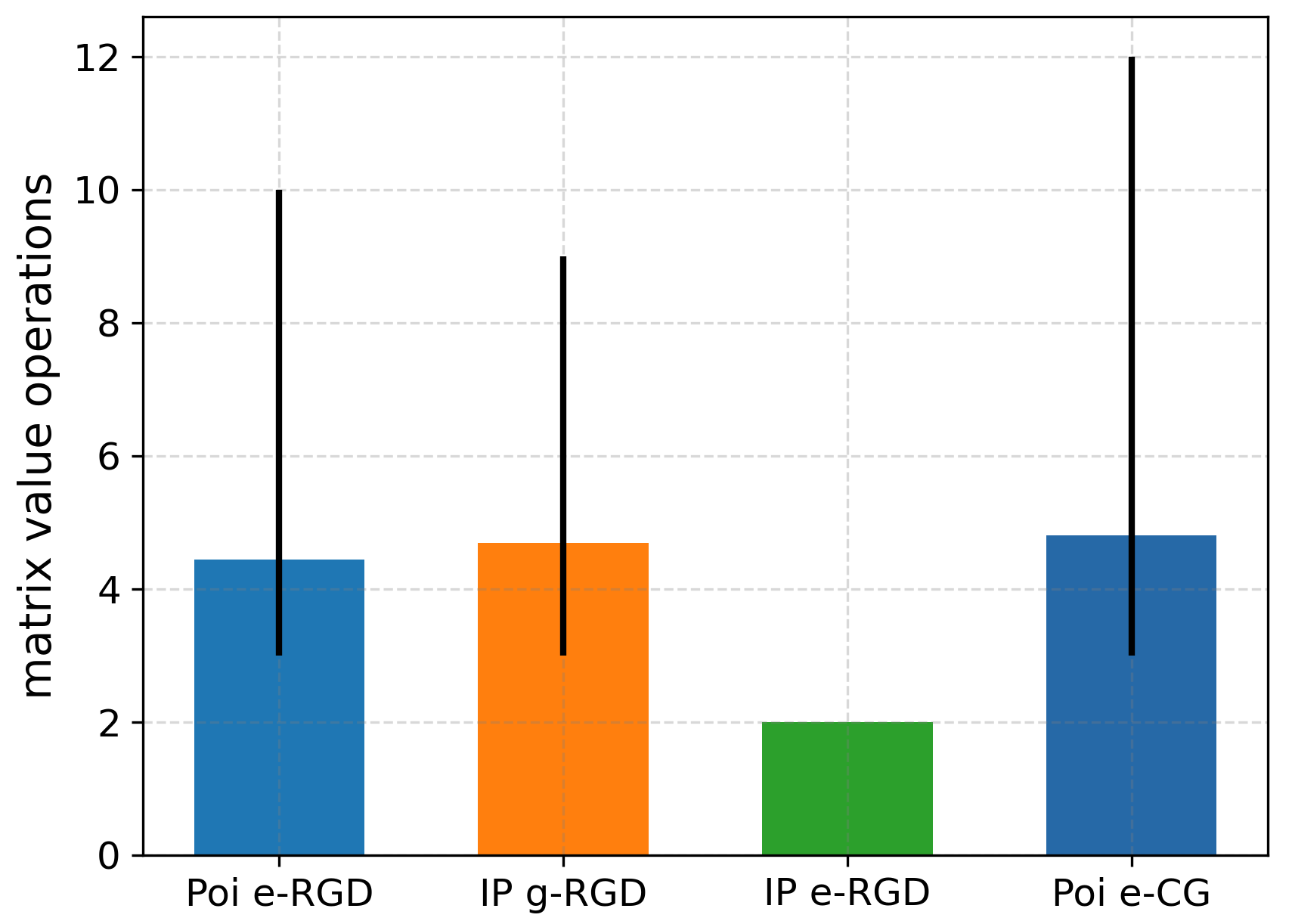} \\ 
    \end{tabular}
    \caption{\textbf{Comparison of Algorithms.}
\textsc{Left:} Relative function values of EG (Poi $e$-RGD), its accelerated variant (Poi $e$-CG), and $g$-RGD and $e$-RGD with interior-point geometry. Accelerated EG outperforms and terminates after 148 iterations.
\textsc{Right:} Average matrix-value operations across iterations. While IP $e$-RGD is cheaper without line search, EG still achieves better performance.}
    \label{fig:two_rows_images}
\end{figure}
\section{Conclusion}\label{sec:conclusion}
We characterized the exponentiated gradient (EG) method as Riemannian gradient descent (RGD) on the Poisson parameter manifold induced by Fisher-Rao geometry, with a focus on Riemannian line search via a suitable retraction. We proved finite termination under weak conditions beyond standard $L$-smoothness. Our setup enabled efficient vector transport and motivated a conjugate EG method, whose convergence under similar conditions is left for future work. Numerical experiments, including an accelerated variant, highlight EG's practical advantages, such as faster convergence compared to interior-point RGD.

\vspace{2mm}
\textbf{Acknowledgment.} This work is funded by Deutsche Forschungsgemeinschaft (DFG) under Germany’s Excellence Strategy EXC-2181/1 - 390900948 (the Heidelberg STRUCTURES Excellence Cluster).

\FloatBarrier

\bibliographystyle{plain}
\bibliography{bibliography}
\end{document}